\newcommand{\ceil}[1]{\left\lceil #1 \right\rceil}
\begin{document}

\title{Solving Large-Scale Sparse PCA to Certifiable (Near) Optimality}

\author{\name Dimitris Bertsimas \email dbertsim@mit.edu \\
       \addr Massachusetts Institute of Technology\\
       Cambridge, MA 02139, USA
       \AND
       \name Ryan Cory-Wright \email ryancw@mit.edu \\
       \addr Massachusetts Institute of Technology\\
       Cambridge, MA 02139, USA
       \AND
       \name Jean Pauphilet \email jpauphilet@london.edu \\
       \addr London Business School\\
       London, UK}

\editor{TBD}

\maketitle

\begin{abstract}
Sparse principal component analysis (PCA) is a popular dimensionality reduction technique for obtaining principal components which are linear combinations of a small subset of the original features.  Existing 
approaches  cannot supply certifiably optimal principal components with more than $p=100s$ {of variables}.
By reformulating sparse PCA as a convex mixed-integer semidefinite optimization problem, we design a cutting-plane method which solves the problem to certifiable optimality at the scale of selecting {\color{black}$k=5$} covariates from $p=300$ variables, and provides small bound gaps at a larger scale. We also propose {\color{black}a} convex relaxation and {\color{black}greedy} rounding scheme that provides {\color{black} bound gaps of $1-2\%$ in practice} within minutes for $p=100$s or hours for $p=1,000$s {\color{black}and is therefore a viable alternative to the exact method at scale}.
Using real-world financial and medical datasets, we illustrate our approach's ability to derive interpretable principal components tractably at scale.
\end{abstract}

\begin{keywords}
  Sparse PCA, Sparse Eigenvalues, Semidefinite Optimization
\end{keywords}

\section{Introduction}
In the era of big data, interpretable methods for compressing a high-dimensional dataset into a lower dimensional set which shares the same essential characteristics are imperative. Since the work of \citet{hotelling1933analysis}, principal component analysis (PCA) has been one of the most popular approaches for completing this task. Formally, given centered data $\bm{A} \in \mathbb{R}^{n \times p}$ and its normalized empirical covariance matrix $\bm{\Sigma}:=\frac{\bm{A} \bm{A}^\top }{{n-1}} \in \mathbb{R}^{p \times p}$, PCA selects one or more leading eigenvectors of $\bm{\Sigma}$ and subsequently projects $\bm{A}$ onto these eigenvectors. This can be achieved in $O(p^3)$ time by taking a singular value decomposition {\color{black}$\bm{\Sigma}=\bm{U}\bm{\Lambda}\bm{U}^\top$}.

A common criticism\footnote{\color{black}A second criticism of PCA is that, as set up here, it uses the sample correlation or covariance matrix. This is a drawback, because sample covariance matrices are poorly conditioned estimators which over-disperses the sample eigenvalues, particularly in high-dimensional settings. In practice, this can be rectified by, e.g., using a shrinkage estimator \citep[see, e.g.,][]{ledoit2004well}. We do not do so here for simplicity, but we recommend doing so if using the techniques developed in this paper in practice.} of PCA is that the columns of {\color{black}$\bm{U}$} are not interpretable, since each eigenvector is a linear combination of all $p$ original features. This causes difficulties because:
\begin{itemize}\itemsep0em
    \item {\color{black}In medical applications such as cancer detection, PCs generated during exploratory data analysis need to supply interpretable modes of variation \citep{hsu2014sparse}.}
    \item In scientific applications such as protein folding, each original co-ordinate axis has a physical interpretation, and the reduced set of co-ordinate axes should too.
    \item In finance applications such as investing capital across index funds, each non-zero entry in each eigenvector used to reduce the feature space incurs a transaction cost.
    \item If $p \gg n$, PCA suffers from a curse of dimensionality and becomes physically meaningless \citep{amini2008high}.
\end{itemize}

One common method for obtaining interpretable principal components is to stipulate that they are sparse, i.e., maximize variance while containing at most $k$ non-zero entries. This approach leads to the following non-convex mixed-integer quadratically constrained problem \citep[see][]{d2005direct}:
\begin{align}\label{OriginalSPCA}
    \max_{\bm{x} \in \mathbb{R}^p} \ \bm{x}^\top \bm{\Sigma} \bm{x} \quad
    \text{s.t.} \quad \bm{x}^\top \bm{x}= 1,\ \vert \vert \bm{x} \vert \vert_0 \leq k,
\end{align}
where the constraint $\vert \vert \bm{x} \vert \vert_0 \leq k$ forces variance to be explained in a compelling fashion.

\subsection{Background and Literature Review}
Owing to sparse PCA's fundamental importance in a variety of applications including best subset selection \citep{d2008optimal}, natural language processing \citep{zhang2012sparse}, compressed sensing \citep{candes2007dantzig}, and clustering \citep{luss2010clustering}, three distinct classes of methods for addressing Problem \eqref{OriginalSPCA} have arisen. Namely, (a) heuristic methods which obtain high-quality sparse PCs in an efficient fashion but do not supply guarantees on the quality of the solution, (b) convex relaxations which obtain certifiably near-optimal solutions by solving a convex relaxation and rounding, and (c) exact methods which obtain certifiably optimal solutions, albeit in exponential time.

\paragraph{Heuristic Approaches: }
The importance of identifying a small number of interpretable principal components has been well-documented in the literature since the work of \citet{hotelling1933analysis} \citep[see also][]{jeffers1967two}, giving rise to many distinct heuristic approaches for obtaining high-quality solutions to Problem \eqref{OriginalSPCA}. Two interesting such approaches are to rotate dense principal components to promote sparsity \citep{kaiser1958varimax, richman1986rotation, jolliffe1995rotation}, or apply an $\ell_1$ penalty term as a convex surrogate to the cardinality constraint \citep{jolliffe2003modified, zou2006sparse}. Unfortunately, the former approach does not provide performance guarantees, while the latter approach {still results in} a non-convex optimization problem.

More recently, motivated by the need to rapidly obtain high-quality sparse principal components at scale, a wide variety of first-order heuristic methods have emerged. The first such \textit{modern} heuristic was developed by \citet{journee2010generalized}, and involves combining the power method with thresholding and re-normalization steps. By pursuing similar ideas, several related methods have since been developed \citep[see][]{witten2009penalized, hein2010inverse, richtarik2012alternating, luss2013conditional, yuan2013truncated}. Unfortunately, while these methods are often very effective in practice, they sometimes badly fail to recover an optimal sparse principal component, and a practitioner using a heuristic method typically has no way of knowing when this has occurred. Indeed, \citet{berk2017} recently compared $7$ heuristic methods, including most of those reviewed here, on $14$ instances of sparse PCA, and found that none of the heuristic methods successfully recovered an optimal solution in all $14$ cases {(i.e., no heuristic was right all the time).}

\paragraph{Convex Relaxations: }
Motivated by the shortcomings of heuristic approaches on high-dimensional datasets, and the successful application of semi-definite optimization in obtaining high-quality approximation bounds in other applications \citep[see][]{goemans1995improved, wolkowicz2012handbook}, a variety of convex relaxations have been proposed for sparse PCA. The first such convex relaxation was proposed by  \citet{d2005direct}, who reformulated sparse PCA as the rank-constrained mixed-integer semidefinite optimization problem (MISDO):
\begin{equation}\label{sdospca1}
\begin{aligned}
    \max_{\bm{X} \succeq \bm{0}} \quad \langle \bm{\Sigma}, \bm{X} \rangle\ \text{s.t.} \ \mathrm{tr}(\bm{X})=1, \ \Vert\bm{X}\Vert_0 \leq k^2,\ \mathrm{Rank}(\bm{X})=1,
\end{aligned}
\end{equation}
where $\bm{X}$ models the outer product $\bm{x}\bm{x}^\top$. {\color{black}Note that, for a rank-one matrix $\bm{X}$, the constraint $\Vert \bm{X}\Vert_0 \leq k^2$ in \eqref{sdospca1} is equivalent to the constraint $\Vert\bm{x}\Vert_0 \leq k$ in \eqref{OriginalSPCA}, since a vector $\bm{x}$ is $k$-sparse if its outer product $\bm{x}\bm{x}^\top$ is $k^2$-sparse.}
{After performing this reformulation,} \citet{d2005direct} relaxed both the cardinality and rank constraints and instead solved
\begin{equation}\label{sdospca1.relax}
\begin{aligned}
    \max_{\bm{X} \succeq \bm{0}} \quad \langle \bm{\Sigma}, \bm{X} \rangle\ \text{s.t.} \ \mathrm{tr}(\bm{X})=1, \ \Vert\bm{X}\Vert_1 \leq k,
\end{aligned}
\end{equation}
which supplies a valid upper bound on Problem \eqref{OriginalSPCA}'s objective.

The semidefinite approach has since been refined in a number of follow-up works. Among others, \citet{d2008optimal}, building upon the work of \citet{ben2002tractable}, proposed a different semidefinite relaxation which supplies a sufficient condition for optimality via the primal-dual KKT conditions, and \citet{d2014approximation} analyzed the quality of the semidefinite relaxation in order to obtain high-quality approximation bounds. A common theme in these approaches is that they require solving large-scale semidefinite optimization problems. This presents difficulties for practitioners because state-of-the-art implementations of interior point methods such as \verb|Mosek| require $O(p^6)$ memory to solve Problem \eqref{sdospca1.relax}, and therefore currently cannot solve instances of Problem \eqref{sdospca1.relax} with $p \geq 300$ \citep[see][for a recent comparison]{bertsimas2019polyhedral}. {\color{black}Techniques other than interior point methods, e.g., ADMM or augmented Lagrangian methods as reviewed in \cite{majumdar2019survey} could also be used to solve Problem \eqref{sdospca1.relax}, although they tend to require more runtime than IPMs to obtain a solution of a similar accuracy and be numerically unstable for problem sizes where IPMs run out of memory \citep{majumdar2019survey}.}

{A number of works have also studied the statistical estimation properties of Problem \eqref{sdospca1.relax}, by assuming an underlying probabilistic model. Among others, \citet{amini2008high} have demonstrated the asymptotic consistency of Problem \eqref{sdospca1.relax} under a spiked covariance model once the number of samples used to generate the covariance matrix exceeds a certain threshold; see \cite{vu2012minimax, berthet2013optimal, wang2016statistical} for further results in this direction, \cite{miolane2018phase} for a recent survey.}

{ In an complementary direction, \citet{dey2018convex} has recently questioned the modeling paradigm of lifting $\bm{x}$ to a higher dimensional space by instead considering the following (tighter) relaxation of sparse PCA in the original problem space
\begin{align}\label{prob:l1relax_small}
    \max_{\bm{x} \in \mathbb{R}^p}\quad \bm{x}^\top \bm{\Sigma}\bm{x}\quad \text{s.t.}\quad \Vert\bm{x}\Vert_2=1, {\color{black}\Vert \bm{x}\Vert_1 \leq \sqrt{k}}.
\end{align}

Interestingly, Problem \eqref{prob:l1relax_small}'s relaxation provides a $\left(1+\sqrt{\tfrac{k}{k+1}}\right)^2$-factor bound approximation of Problem \eqref{OriginalSPCA}'s objective, while Problem \eqref{sdospca1.relax}'s upper bound may be exponentially larger in the worst case \citep{amini2008high}. This additional tightness, however, comes at a price: Problem \eqref{prob:l1relax_small} is NP-hard to solve—indeed, providing a constant-factor guarantee on sparse PCA is NP-hard \citep{magdon2017np}—and thus \eqref{prob:l1relax_small} is best formulated as a MIO, while Problem \eqref{sdospca1.relax} can be solved in polynomial time.
}

More recently, by building on the work of \citet{kim2001second}, \citet[]{bertsimas2019polyhedral} introduced a second-order cone relaxation of \eqref{sdospca1} which scales to $p=1000s$, and matches the semidefinite bound after imposing a small number of cuts. Moreover, it typically supplies bound gaps of less than $5\%$. However, it does not supply an \textit{exact} certificate of optimality, which is often desirable, for instance in medical applications. 

A fundamental drawback of existing convex relaxation techniques is that they are not coupled with rounding schemes for obtaining high-quality feasible solutions. This is problematic, because optimizers are typically interested in obtaining high-quality solutions, rather than certificates. In this paper, we take a step in this direction, by deriving new convex relaxations that naturally give rise to greedy and random rounding schemes. The fundamental point of difference between our relaxations and existing relaxations is that we derive our relaxations by rewriting sparse PCA as a MISDO and dropping an integrality constraint, rather than using more ad-hoc techniques.

\paragraph{Exact Methods:} Motivated by the successful application of mixed-integer optimization for solving statistical learning problems such as best subset selection \citep{bertsimas2020sparse} and sparse classification \citep{bertsimas2017sparse}, several exact methods for solving sparse PCA to certifiable optimality have been proposed. The first branch-and-bound algorithm for solving Problem \eqref{OriginalSPCA} was proposed by \citet{moghaddam2006spectral}, by applying norm equivalence relations to obtain valid bounds. However, \citet{moghaddam2006spectral} did not couple their approach with high-quality initial solutions and tractable bounds to prune partial solutions. Consequently, they could not scale their approach beyond $p=40$.

A more sophisticated branch-and-bound scheme was recently proposed by \citet{berk2017}, which couples tighter Gershgorin Circle Theorem bounds \citep[][Chapter 6]{horn1990matrix} with a fast heuristic due to \cite{yuan2013truncated} to solve problems up to $p=250$. However, their method cannot scale beyond $p=100$s, because the bounds obtained are too weak to avoid enumerating a sizeable portion of the tree.

Recently, the authors developed a framework for reformulating convex mixed-integer optimization problems with logical constraints \citep[see][]{bertsimas2019unified}, and demonstrated that this framework allows a number of problems of practical relevance to be solved to certifiably optimality via a cutting-plane method. In this paper, we build upon this work by reformulating Problem \eqref{OriginalSPCA} as a \textit{convex} mixed-integer semidefinite optimization problem, and leverage this reformulation to design a cutting-plane method which solves sparse PCA to certifiable optimality. A key feature of our approach is that we need not solve any semidefinite subproblems. Rather, we use {concepts} from SDO to design a semidefinite-free approach which uses simple linear algebra techniques.

{ Concurrently to our initial submission, \citet{li2020exact} also attempted to reformulate sparse PCA as an MISDO, and proposed valid inequalities for strengthening their formulation and local search algorithms for obtaining high-quality solutions at scale. Our work differs in the following two ways. First, we propose strengthening the MISDO formulation using the Gershgorin circle theorem and demonstrate that this allows our MISDO formulation to scale to problems with $p=100$s of features, while they do not, to our knowledge, solve any MISDOs to certifiable optimality where $p>13$. Second, we develop tractable second-order cone relaxations and greedy rounding schemes which allow practitioners to obtain certifiably near optimal sparse principal components even in the presence of $p=1,000$s of features. More remarkable than the differences between the works however is the similarities: more than $15$ years after \citet{d2005direct}'s landmark paper first appeared, both works proposed reformulating sparse PCA as an MISDO less than a week apart. In our view, this demonstrates that the ideas contained in both works transcend sparse PCA, and can perhaps be applied to other problems in the optimization literature which have not yet been formulated as MISDOs.}

\subsection{Contributions and Structure}
The main contributions of the paper are twofold. First, we reformulate sparse PCA exactly as a mixed-integer semidefinite optimization problem; a reformulation which is, to the best of our knowledge, novel. Second, we leverage this MISDO formulation to design efficient algorithms for solving non-convex mixed-integer quadratic optimization problems, such as sparse PCA, to certifiable optimality or {\color{black}within $1-2\%$ of optimality in practice} at a larger scale than existing state-of-the-art methods.
The structure and detailed contributions of the paper are as follows:
\begin{itemize}\itemsep0em
    \item In Section \ref{sec:exact.misdo}, we reformulate Problem \eqref{OriginalSPCA} as a mixed-integer SDO.  { We propose a cutting-plane method which solves it to certifiable optimality in Section \ref{sec:exact.oa}. Our algorithm decomposes the problem into a purely binary master problem and a semidefinite separation problem. Interestingly, we show in Section \ref{sec:exact.subpb} that the separation problems can be solved efficiently via a leading eigenvalue computation and does not require any SDO solver. Finally, {\color{black}the} Gershgorin Circle theorem has been empirically successful for deriving upper-bounds on the objective value of \eqref{OriginalSPCA} \citep{berk2017}. We theoretically analyze the quality of such bounds in Section \ref{sec:exact.circle} and show in Section \ref{sec:exact.oval} that tighter bounds derived from Brauer's ovals of Cassini theorem can also be imposed via mixed-integer second-order cone constraints.}
    \item In Section \ref{sec:relaxandround}, we analyze the semidefinite reformulation's convex relaxation, and introduce a greedy rounding scheme (Section \ref{ssec:relax.bool}) which supplies high-quality solutions to Problem \eqref{OriginalSPCA} in polynomial time, {\color{black} together with a sub-optimality gap (see numerical experiments in Section \ref{sec:numres})}. To further improve the quality of rounded solution and the optimality gap, we introduce strengthening inequalities (Section \ref{ssec:validineq}). While solving the strengthened formulation exactly would result in an intractable MISDO problem, solving its relaxation and rounding the solution appears as an efficient strategy to return high-quality solutions with a {\color{black} numerical certificate} of near-optimality.
    \item In Section \ref{sec:numres}, we apply the cutting-plane and random rounding methods to derive optimal and near optimal sparse principal components for problems in the UCI dataset. We also compare our method's performance against the method of \citet{berk2017}, and find that our exact cutting-plane method performs comparably, while our relax+round approach successfully scales to problems an order of magnitude larger {\color{black}and often returns solutions which outperform the exact method at sizes which the exact method cannot currently scale to}. A key feature of our numerical success is that we sidestep the computational difficulties in solving SDOs at scale by proposing semidefinite-free methods for solving the convex relaxations, i.e., solving second-order cone relaxations.
\end{itemize}
\paragraph{Notation: }
We let nonbold face characters such as $b$ denote scalars, lowercase bold faced characters such as $\bm{x}$ denote vectors, uppercase bold faced characters such as $\bm{X}$ denote matrices, and calligraphic uppercase characters such as $\mathcal{Z}$ denote sets. We let $[p]$ denote the set of running indices $\{1, ..., p\}$. We let $\mathbf{e}$ denote a vector of all $1$'s, $\bm{0}$ denote a vector of all $0$'s, and $\mathbb{I}$ denote the identity matrix, with dimension implied by the context.

We also use an assortment of matrix operators. We let $\langle \cdot,\cdot \rangle$ denote the Euclidean inner product between two matrices, $\Vert \cdot \Vert_F$ denote the Frobenius norm of a matrix, $\Vert \cdot \Vert_\sigma$ denote the spectral norm of a matrix, $\Vert \cdot \Vert_*$ denote the nuclear norm of a matrix, $\bm{X}^\dag$ denote the Moore-Penrose psuedoinverse of a matrix $\bm{X}$ and $S_+^p$ denote the $p \times p$ positive semidefinite cone; see \citet{horn1990matrix} for a general theory of matrix operators. 

\section{An Exact Mixed-Integer Semidefinite {\color{black} Optimization Algorithm}}\label{sec:reformulation}
In Section \ref{sec:exact.misdo}, we reformulate Problem \eqref{OriginalSPCA} as a convex mixed-integer semidefinite optimization problem. From this formulation, we propose an outer-approximation scheme (Section \ref{sec:exact.oa}) which, as we show in Section \ref{sec:exact.subpb}, does not require solving any semidefinite problems. We improve convergence of the algorithm by deriving quality upper-bounds on Problem's \eqref{OriginalSPCA} objective value in Section \ref{sec:exact.circle} and \ref{sec:exact.oval}.
{\color{black}
\subsection{A Mixed-Integer Semidefinite Reformulation} \label{sec:exact.misdo}
}
Starting from the rank-constrained SDO formulation \eqref{sdospca1}, we introduce binary variables $z_i$ to model whether $X_{i,j}$ is non-zero, via the logical constraint $X_{i,j}=0$ if $z_i=0$; note that we need not require that $X_{i,j}=0$ if $z_j=0$, since $\bm{X}$ is a symmetric matrix. By enforcing the logical constraint via $-M_{i,j}z_i \leq X_{i,j}\leq M_{i,j}z_i$ for sufficiently large $M_{i,j}>0$, Problem \eqref{sdospca1} becomes
\begin{align*}
    \max_{\bm{z} \in \{0, 1\}^p: \bm{e}^\top \bm{z} \leq k} \ \max_{\bm{X} \in S^p_+} \quad & \langle \bm{\Sigma}, \bm{X} \rangle\\
    \text{s.t.} \quad & \mathrm{tr}(\bm{X})=1,\ -M_{i,j}z_i \leq X_{i,j} \leq M_{i,j}z_i \ \forall i, j \in [p],\ \mathrm{Rank}(\bm{X})=1.
\end{align*}

To obtain a MISDO reformulation, we omit the rank constraint. In general, omitting a rank constraint generates a relaxation and induces some loss of optimality. Remarkably, this omission 
is without loss of optimality in this case. Indeed, the objective is convex and therefore some rank-one extreme matrices $\bm{X}$ is optimal. We formalize this observation in the following theorem; note that a similar result—although in the context of computing Restricted Isometry constants and with a different proof—exists \citep[][]{gally2016computing}:

\begin{theorem}\label{thm:misdpreformthm}
Problem \eqref{OriginalSPCA} attains the same optimal objective value as the problem:
\begin{equation}\label{misdpprimal}
\begin{aligned}
    \max_{\bm{z} \in \{0, 1\}^p: \bm{e}^\top \bm{z} \leq k} \ \max_{\bm{X} \in S^p_+} \quad & \langle \bm{\Sigma}, \bm{X} \rangle&\\ \text{s.t.} \quad &  \mathrm{tr}(\bm{X})=1 \quad & \\
    & \vert X_{i,j}\vert \leq M_{i,j}z_i \quad &  \forall i, j \in [p],\\
    & \color{black}\sum_{j=1}^p \vert X_{i,j}\vert \leq \sqrt{k} z_i \quad &  \forall i \in [p],
\end{aligned}
\end{equation}
where $M_{i,i}=1$, {\color{black}and} $M_{i,j}=\frac{1}{2}$ if $j \neq i$. 
\end{theorem}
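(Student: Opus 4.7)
The plan is to show the two optima coincide by proving the inequalities in both directions separately. For the $\geq$ direction I would construct a MISDO-feasible rank-one lift of an arbitrary solution to \eqref{OriginalSPCA}; for the $\leq$ direction I would decompose an arbitrary MISDO-feasible $\bm{X}$ spectrally and extract a feasible vector for \eqref{OriginalSPCA}.

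For the lift, take $\bm{x}$ feasible for \eqref{OriginalSPCA} and set $\bm{X} := \bm{x}\bm{x}^\top$ and $z_i := 1$ if $x_i \neq 0$, else $z_i := 0$, so that $\bm{e}^\top\bm{z} \leq k$. The trace, support, and semidefiniteness constraints are immediate. For the big-$M$ bounds, I would write $|X_{i,i}| = x_i^2 \leq 1 = M_{i,i}$ on the diagonal and $|x_i x_j| \leq \tfrac{1}{2}(x_i^2 + x_j^2) \leq \tfrac{1}{2} = M_{i,j}$ off-diagonal, which pins down why the stated values of $M_{i,j}$ are chosen. The key remaining check is the $\ell_1$ row constraint
\begin{equation*}
\sum_{j=1}^p |X_{i,j}| \;=\; |x_i|\,\|\bm{x}\|_1 \;\leq\; \|\bm{x}\|_\infty \cdot \|\bm{x}\|_1 \;\leq\; 1 \cdot \sqrt{k},
\end{equation*}
where the bound $\|\bm{x}\|_1 \leq \sqrt{\|\bm{x}\|_0}\,\|\bm{x}\|_2 \leq \sqrt{k}$ follows from Cauchy--Schwarz. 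The objective is preserved because $\langle \bm{\Sigma},\bm{X}\rangle = \bm{x}^\top \bm{\Sigma}\bm{x}$.

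For the converse, take any MISDO-feasible $(\bm{X},\bm{z})$; the constraint $|X_{i,j}| \leq M_{i,j} z_i$ forces every row $i$ with $z_i = 0$ to be the zero row of $\bm{X}$, and by symmetry of $\bm{X}$ the $i$-th column also vanishes. Thus $\bm{X}$ is supported on the principal submatrix indexed by $S := \{i : z_i = 1\}$ with $|S| \leq k$. Writing the spectral decomposition $\bm{X} = \sum_\ell \lambda_\ell \bm{v}_\ell \bm{v}_\ell^\top$ with $\lambda_\ell \geq 0$, $\|\bm{v}_\ell\|_2 = 1$, and $\sum_\ell \lambda_\ell = \mathrm{tr}(\bm{X}) = 1$, the eigenvalue equation $\bm{X}\bm{v}_\ell = \lambda_\ell \bm{v}_\ell$ restricted to coordinates $i \notin S$ yields $(\bm{v}_\ell)_i = 0$ whenever $\lambda_\ell > 0$. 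So each such $\bm{v}_\ell$ is feasible for \eqref{OriginalSPCA}, and
\begin{equation*}
\langle \bm{\Sigma},\bm{X}\rangle \;=\; \sum_\ell \lambda_\ell\, \bm{v}_\ell^\top \bm{\Sigma}\bm{v}_\ell \;\leq\; \max_\ell \bm{v}_\ell^\top \bm{\Sigma}\bm{v}_\ell,
\end{equation*}
which is bounded by the optimum of \eqref{OriginalSPCA} because each $\bm{v}_\ell$ is an original-feasible candidate.

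The main conceptual point I would emphasize---easily missed on a first read---is that only the support-forcing implication ``$z_i = 0 \Rightarrow$ row $i$ vanishes'' is needed for the $\leq$ direction; the specific big-$M$ values ($M_{i,i} = 1$, $M_{i,j} = 1/2$) and the $\ell_1$-row constraint $\sum_j |X_{i,j}| \leq \sqrt{k}\,z_i$ are valid strengthenings (the forward direction shows they hold on every rank-one lift) that preserve the integer optimum while tightening the continuous relaxation exploited later in the paper.
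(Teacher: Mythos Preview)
Your proof is correct and follows essentially the same approach as the paper: a rank-one lift for the $\geq$ direction and a spectral (the paper says ``Cholesky'') decomposition supported on $S=\{i:z_i=1\}$ for the $\leq$ direction. The only cosmetic difference is that you verify $|X_{i,j}|\leq\tfrac12$ on the rank-one lift via AM--GM, whereas the paper notes the slightly stronger fact that the $2\times2$ minors of any $\bm{X}\succeq\bm{0}$ with $\mathrm{tr}(\bm{X})=1$ force $X_{i,j}^2\leq X_{i,i}X_{j,j}\leq\tfrac14$; both suffice here.
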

\begin{remark}
Observe that if {\color{black} $k \leq \sqrt{n}$ and} we set $M_{i,j}=1 \ \forall i,j \in [p]$ in Problem \eqref{misdpprimal} {\color{black}and omit the valid inequality $\sum_{j=1}^p \vert X_{i,j}\vert \leq \sqrt{k} z_i$} then the optimal value of the continuous relaxation is trivially $\lambda_{\max}(\bm{\Sigma})$. Indeed, letting $\bm{x}$ be a leading eigenvector of the unconstrained problem (where $\Vert \bm{x}\Vert_2=1$), we can set $z_i=\vert x_i\vert \geq \vert x_i \vert \vert x_j\vert${\color{black}, where the inequality holds since $\Vert \bm{x}\Vert_2=1$,} and $X_{i,j}=x_ix_j$, meaning {\color{black}(a) $\sum_i z_i=\Vert \bm{x}\Vert_1 \leq k \leq \sqrt{n}$ {\color{black}by norm equivalence} and (b) $\vert X_{i,j}\vert \leq z_i$} and thus $(\bm{X}, \bm{z})$ solves this continuous relaxation. Therefore, setting $M_{i,j}=\frac{1}{2}$ if $j\neq i$ {\color{black}and/or imposing the valid inequality $\sum_{j=1}^p \vert X_{i,j}\vert \leq \sqrt{k} z_i$} is necessary for obtaining non-trivial relaxations {\color{black}whenever $k$ is small}.
\end{remark}
{\color{black}
}


\begin{proof}
It suffices to demonstrate that for any feasible solution to \eqref{OriginalSPCA} we can construct a feasible solution to \eqref{misdpprimal} with an equal or greater payoff, and vice versa.
\begin{itemize}\itemsep0em
    \item Let $\bm{x} \in \mathbb{R}^{p}$ be a feasible solution to \eqref{OriginalSPCA}. Then, {\color{black}since $\Vert \bm{x}\Vert_1 \leq \sqrt{k}$}, $(\bm{X}:=\bm{x}\bm{x}^\top, \bm{z})$ is a feasible solution to \eqref{misdpprimal} with equal cost, where $z_i=1$ if $\vert x_i\vert>0$, $z_i=0$ otherwise.
    \item Let $(\bm{X}, \bm{z})$ be a feasible solution to Problem \eqref{misdpprimal}, and let $\bm{X}=\sum_{i=1}^p \sigma_i \bm{x}_i\bm{x}_i^\top$ be a Cholesky decomposition of $\bm{X}$, where $\bm{e}^\top \bm{\sigma}=1, \bm{\sigma} \geq \bm{0}$, {\color{black}and $\Vert \bm{x}_i\Vert_2=1 \ \forall i \in [p]$}. Observe that $\Vert\bm{x}_i\Vert_0 \leq k\ \forall i \in [p], $ since we can perform the Cholesky decomposition on the submatrix of $\bm{X}$ induced by $\bm{z}$, and ``pad'' out the remaining entries of each $\bm{x}_i$ with $0$s to obtain the decomposition of $\bm{X}$. Therefore, let us set $\hat{\bm{x}}:=\arg\max_{i}[\bm{x}_i^\top \bm{\Sigma}\bm{x}_i]$. Then, $\hat{\bm{x}}$ is a feasible solution to \eqref{OriginalSPCA} with an equal or greater payoff.
\end{itemize}
Finally, we let $M_{i,i}=1$, $M_{i,j}=\frac{1}{2}$ if $i \neq j$, as the $2 \times 2$ minors imply $X_{i,j}^2 \leq X_{i,i}X_{j,j}\leq \frac{1}{4}$ whenever $i \neq j$ \citep[c.f.][Lemma 1]{gally2016computing}.
\end{proof}
Theorem \ref{thm:misdpreformthm} reformulates Problem \eqref{OriginalSPCA} as a mixed-integer SDO. Therefore, we can solve Problem \eqref{misdpprimal} using general branch-and-cut techniques for semidefinite optimization problems \citep[see][]{gally2018framework, kobayashi2019branch}. However, this approach is not scalable, as it comprises solving a large number of semidefinite subproblems and the community does not know how to efficiently warm-start interior point methods (IPMs) for SDOs.

Alternatively, we propose a saddle-point reformulation of Problem \eqref{misdpprimal} which avoids the computational difficulty of solving a large number of SDOs by exploiting problem structure, as we will show in Section \ref{sec:exact.subpb}. The following result reformulates Problem \eqref{misdpprimal} as a max-min saddle-point problem amenable to outer-approximation: 

\begin{theorem}\label{thm:saddlepointtheorem}
Problem \eqref{misdpprimal} attains the same optimal value as the following problem:
\begin{align}\label{prob:saddlepointproblem}
\max_{\bm{z} \in \{0, 1\}^p: \ \bm{e}^\top \bm{z} \leq k} \quad f(\bm{z})\\ \label{eqn:separation}
    \quad \text{ where } \quad f(\bm{z}):= \min_{\lambda \in \mathbb{R}, \bm{\alpha} \in \mathbb{R}^{p \times p} {\color{black},\bm{\beta} \in \mathbb{R}^p}} \quad & \lambda  +\sum_{i=1}^p {z}_i {\color{black}\left(\sum_{j=1}^p M_{i,j}\max(0, \vert \alpha_{i,j}\vert-\beta_i)+{\color{black}\sqrt{k}\beta_i}\right)}\\
    \text{s.t.} \quad & \lambda\mathbb{I}+\bm{\alpha} \succeq \bm{\Sigma}.\nonumber
\end{align}

\end{theorem}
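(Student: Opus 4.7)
My plan is to apply Lagrangian duality to the inner maximization over $\bm{X}$ in \eqref{misdpprimal} for each fixed $\bm{z}$, and then project out the auxiliary dual multipliers so that what remains matches the expression for $f(\bm{z})$ in \eqref{eqn:separation}. The outer maximization over $\bm{z}$ is unaffected, so the saddle-point form follows immediately.

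The first move is to linearize the two families of absolute-value constraints by introducing epigraphic variables $W_{i,j}$ with $W_{i,j}\geq X_{i,j}$ and $W_{i,j}\geq -X_{i,j}$, and then replacing $|X_{i,j}|\leq M_{i,j}z_i$ by $W_{i,j}\leq M_{i,j}z_i$ and $\sum_j|X_{i,j}|\leq\sqrt{k}z_i$ by $\sum_j W_{i,j}\leq \sqrt{k}z_i$. Since we are maximizing in $\bm{X}$ and $W_{i,j}$ only appears on the right-hand side of its lower-bounding constraints and in linear upper bounds, at optimality $W_{i,j}=|X_{i,j}|$ and the reformulation is exact. Next, I attach multipliers $\lambda\in\mathbb{R}$ to the trace constraint, $\alpha^+_{i,j},\alpha^-_{i,j}\geq 0$ to the two linearized inequalities defining $W_{i,j}\geq|X_{i,j}|$, $\gamma_{i,j}\geq 0$ to $W_{i,j}\leq M_{i,j}z_i$, and $\beta_i\geq 0$ to $\sum_j W_{i,j}\leq\sqrt{k}z_i$. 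Define $\alpha_{i,j}:=\alpha^+_{i,j}-\alpha^-_{i,j}$. The inner maximization over $\bm{X}\succeq 0$ is bounded precisely when $\lambda\mathbb{I}+\bm{\alpha}\succeq\bm{\Sigma}$, in which case it contributes $\lambda$; the inner maximization over the unconstrained $\bm{W}$ is bounded precisely when $\alpha^+_{i,j}+\alpha^-_{i,j}\leq\gamma_{i,j}+\beta_i$, in which case the $\bm{W}$-terms vanish.

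The remaining minimization splits entrywise. For fixed $\alpha_{i,j}$ the optimal choice is $\alpha^+_{i,j}+\alpha^-_{i,j}=|\alpha_{i,j}|$, after which $\gamma_{i,j}$ is pushed down to $\gamma_{i,j}=\max(0,|\alpha_{i,j}|-\beta_i)$; substituting back reproduces exactly the objective
\[
\lambda+\sum_{i=1}^p z_i\Bigl(\sum_{j=1}^p M_{i,j}\max(0,|\alpha_{i,j}|-\beta_i)+\sqrt{k}\,\beta_i\Bigr)
\]
subject to $\lambda\mathbb{I}+\bm{\alpha}\succeq\bm{\Sigma}$, as claimed. The part requiring the most care is strong duality: the inner problem for fixed $\bm{z}$ is a conic program, and I expect to invoke Slater's condition on the face of $S^p_+$ carved out by the support of $\bm{z}$. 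The candidate Slater point is $\bm{X}=\tfrac{1}{|\mathrm{supp}(\bm{z})|}\mathbb{I}_{\mathrm{supp}(\bm{z})}$ (embedded into $\mathbb{R}^{p\times p}$ by zero-padding), which trivially has unit trace, satisfies the box constraints with slack since $1/|\mathrm{supp}(\bm{z})|<1=M_{i,i}$, and satisfies the row-sum constraints with slack whenever $k\geq 1$; relative-interior feasibility then yields strong duality and closes the argument. The main obstacle is this conic Slater check, together with verifying that the $\bm{\beta}\geq \bm{0}$ restriction is implicit (it can be dropped because sending any $\beta_i$ to $-\infty$ forces the coefficient $\tfrac{p+1}{2}-\sqrt{k}\geq 0$ of $|\beta_i|$ to drive the objective up, so the minimum is attained at $\beta_i\geq 0$).
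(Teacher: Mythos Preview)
Your proof is correct and follows the same route as the paper: introduce epigraph variables for the absolute values, take the Lagrangian dual of the inner SDP for fixed $\bm{z}$, invoke Slater, then optimize out the auxiliary multipliers (your $\gamma_{i,j}$ is the paper's $\sigma_{i,j}$); you additionally supply an explicit Slater point and an argument for dropping the sign constraint on $\bm{\beta}$, both of which the paper leaves implicit. Two small caveats worth tightening: the $\bm{W}$-maximization is bounded under the \emph{equality} $\alpha^+_{i,j}+\alpha^-_{i,j}=\gamma_{i,j}+\beta_i$ rather than $\leq$ unless you explicitly retain $W_{i,j}\geq 0$, and your Slater point needs $|\mathrm{supp}(\bm{z})|\geq 2$ for the diagonal box constraint to be slack---both are harmless (the subsequent minimization still yields $\gamma_{i,j}=\max(0,|\alpha_{i,j}|-\beta_i)$, and the paper handles the degenerate case by noting $\bm{z}=\bm{0}$ can be ignored).
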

\begin{remark}
The above theorem demonstrates that $f(\bm{z})$ is concave in $\bm{z}$, by rewriting it as the infimum of functions which are linear in $\bm{z}$ \citep[][]{boyd2004convex}.
\end{remark}

\begin{proof}
Let us {\color{black}introduce auxiliary variables $U_{i,j}$ to model the absolute value of $X_{i,j}$ and rewrite the inner optimization problem of \eqref{misdpprimal} as
\begin{equation}\label{prob:sparsepcainnerprimal}
\begin{aligned}\color{black}
    f(\bm{z}):= \quad & \max_{\bm{X} \succeq \bm{0}, \bm{U}} \quad & \langle \bm{\Sigma}, \bm{X}\rangle\\
    \text{s.t.} \quad &  \mathrm{tr}(\bm{X})=1, \quad & & [\lambda]\\\
    & \color{black} U_{i,j} \leq M_{i,j}z_i \ &\forall i, j \in [p], \quad & [\sigma_{i,j}]\\
    & \color{black} \vert X_{i,j}\vert \leq U_{i,j} \ &\forall i, j \in [p],\quad & [\alpha_{i,j}]\\
    & \color{black} \sum_{j=1}^p U_{i,j} \leq \sqrt{k} z_i &\forall i \in [p], \quad & [\beta_{i}]\\
\end{aligned}
\end{equation}
where we associate dual constraint multipliers with primal constraints in square brackets.}
For $\bm{z}$ such that $\bm{e}^\top \bm{z} \geq 1$, the maximization problem induced by $f(\bm{z})$ satisfies Slater's condition \citep[see, e.g.,][Chapter 5.2.3]{boyd2004convex}, strong duality applies and leads to
{\color{black}
\begin{align*}
    f(\bm{z}) = \min_{\substack{\lambda \\ \bm{\sigma},\bm{\alpha},\bm{\beta} \geq \bm{0}}} \quad & \lambda+\sum_{i,j}\sigma_{i,j}M_{i,j}z_i+\sum_{i=1}^p \beta_i \sqrt{k}z_i\\
    \text{s.t.} \quad & \lambda \mathbb{I}+\bm{\alpha} \succeq \bm{\Sigma}, \vert \alpha_{i,j}\vert \leq \sigma_{i,j}+\beta_i.
\end{align*}}
{\color{black}
We eliminate $\bm{\sigma}$ from the dual problem above by optimizing over $\sigma_{i,j}$ and setting $\sigma^\star_{i,j}=\max(0, \vert \alpha_{i,j}\vert-\beta_i)$.}

Note that for $\bm{z}=\bm{0}$, the primal subproblem is infeasible and the dual subproblem has objective $-\infty$, but this can safely be ignored since $\bm{z}=\bm{0}$ is certainly suboptimal. 
\end{proof}

\subsection{A Cutting-Plane Method} \label{sec:exact.oa}

Theorem \ref{thm:saddlepointtheorem} shows that evaluating $f(\bm{\hat{z}})$ yields the globally valid overestimator: $$f(\bm{z}) \leq f(\hat{\bm{z}})+\bm{g}_{\hat{\bm{z}}}^\top(\bm{z}-\hat{\bm{z}}),$$ where $\bm{g}_{\hat{\bm{z}}}$ is a supergradient of $f$ at $\bm{\hat{z}}$, at no additional cost. In particular, we have

$$g_{\hat{\bm{z}},i}={\color{black}\left(\sum_{j=1}^p M_{i,j}\max\left(0, \vert \alpha_{i,j}^\star(\bm{\hat{z}})\vert-\beta_i(\hat{\bm{z}})\right)+{\color{black}\sqrt{k}\beta_i(\hat{\bm{z}})}\right)},$$

where {\color{black}$\bm{\alpha}^\star(\hat{\bm{z}})$, $\bm{\beta}^\star(\hat{\bm{z}})$ constitutes an optimal choice of $(\bm{\alpha}, \bm{\beta})$ for a fixed $\bm{\hat{\bm{z}}}$}. This observation leads to an efficient strategy for maximizing $f(\bm{z})$: iteratively maximizing and refining a piecewise linear upper estimator of $f(\bm{z})$. This strategy is called outer-approximation (OA), and was originally proposed by \citet{duran1986outer}. OA works by iteratively constructing estimators of the following form at each {\color{black}iteration} $t$:
\begin{align}
    f^t(\bm{z})=\min_{1 \leq i \leq t} \left\{f(\bm{z}_i)+\bm{g}_{\bm{z}_i}^\top (\bm{z}-\bm{z}_i)\right\}.
\end{align}
After constructing each overestimator, we maximize $f^t(\bm{z})$ over $\{0, 1\}^p$ to obtain $\bm{z}_t$, and evaluate $f(\cdot)$ and its supergradient at $\bm{z}_t$. This procedure yields a non-increasing sequence of overestimators $\{f^t(\bm{z}_t)\}_{t=1}^T$ which converge to the optimal value of $f(\bm{z})$ within a finite number of iterations {\color{black}$T \leq {p \choose 1}+\ldots+{p \choose k}$}, since $\{0, 1\}^p$ is a finite set and OA never visits a point twice. Additionally, we can avoid solving a different MILO at each OA iteration by integrating the entire algorithm within a single branch-and-bound tree, as proposed by \cite{quesada1992lp}, using \verb|lazy constraint callbacks|. Lazy constraint callbacks are now standard components of modern MILO solvers such as \verb|Gurobi| or \verb|CPLEX| and substantially speed-up OA. We formalize this procedure in Algorithm \ref{alg:cuttingPlaneMethod}; note that $\partial f(\bm{z}_{t+1})$ denotes the set of supergradients of $f$ at $\bm{z}_{t+1}$.
\begin{algorithm*}
\caption{An outer-approximation method for Problem \eqref{OriginalSPCA}}
\label{alg:cuttingPlaneMethod}
\begin{algorithmic}\normalsize
\REQUIRE Initial solution $\bm{z}_1$
\STATE $t \leftarrow 1 $
\REPEAT
\STATE Compute $\bm{z}_{t+1}, \theta_{t+1}$ solution of
{\vspace{-2mm}
\begin{align*}
\max_{\bm{z} \in\{0, 1\}^p: \bm{e}^\top \bm{z} \leq k, \theta} \: \theta \quad \mbox{ s.t. } \theta \leq f(\bm{z}_i) + \bm{g}_{\bm{z}_i}^\top (\bm{z}-\bm{z}_i) \ \forall i \in [t],
\end{align*}}\vspace{-5mm}
\STATE Compute $f(\bm{z}_{t+1})$ and $\bm{g}_{\bm{z}_{t+1}} \in \partial f(\bm{z}_{t+1})$ by solving \eqref{eqn:separation}
\STATE $t \leftarrow t+1 $
\UNTIL{$ f(\bm{z}_t)-\theta_t \leq \varepsilon$}
\RETURN $\bm{z}_t$
\end{algorithmic}
\end{algorithm*}

\subsection{A { Semidefinite-free} Subproblem Strategy} \label{sec:exact.subpb}
Our derivation and analysis of Algorithm \ref{alg:cuttingPlaneMethod} indicates that we can solve Problem \eqref{OriginalSPCA} to certifiable optimality by solving a (potentially large) number of semidefinite subproblems \eqref{eqn:separation}, which might be prohibitive in practice. 
Therefore, we now derive a computationally efficient subproblem strategy which crucially does not require solving \textit{any} semidefinite programs. Formally, we have the following result:
\begin{theorem}\label{compefficientsubproblem}\color{black}
For any $\bm{z} \in \{0, 1\}^p: \bm{e}^\top \bm{z} \leq k$, optimal dual variables in \eqref{eqn:separation} are
\begin{align}\color{black}
    \lambda=\lambda_{\max}\left(\bm{\Sigma}_{1,1}\right),\
   \hat{\bm{\alpha}}=\begin{pmatrix} \hat{\bm{\alpha}}_{1,1} & \hat{\bm{\alpha}}_{1,2}\\ \hat{\bm{\alpha}}_{1,2}^\top & \hat{\bm{\alpha}}_{2,2}\end{pmatrix}=\begin{pmatrix} \bm{0} & \bm{0}\\ \bm{0} & \bm{\Sigma}_{2,2}-\lambda \mathbb{I}+\bm{\Sigma}_{1,2}^\top \left(\lambda \mathbb{I}-\bm{\Sigma}_{1,1}\right)^\dag \bm{\Sigma}_{1,2}\end{pmatrix},\\
   {\color{black}\beta_{i}=(1-z_i)\begin{pmatrix}\vert\hat{\alpha}_{i,1}\vert, \vert\hat{\alpha}_{i,2}\vert, \ldots, \vert\hat{\alpha}_{i,i}\vert, \vert\hat{\alpha}_{i,i}\vert, \ldots, \vert\hat{\alpha}_{i,p}\vert\end{pmatrix}_{[\ceil{\sqrt{k}\ }]} \ \forall i \in [p],}
\end{align}
where $\lambda_{\max}(\cdot)$ denotes the leading eigenvalue of a matrix,  $\hat{\bm{\alpha}}=\begin{pmatrix} \hat{\bm{\alpha}}_{1,1} & \hat{\bm{\alpha}}_{1,2}\\ \hat{\bm{\alpha}_{1,2}^\top} & \hat{\bm{\alpha}}_{2,2}\end{pmatrix}$ is a {\color{black}permutation} such that $\hat{\bm{\alpha}}_{1,1}$ (resp. $\hat{\bm{\alpha}}_{2,2}$) denotes the entries of $\hat{\bm{\alpha}}$ where $z_i=z_j=1$ ($z_i=z_j=0$); $\bm{\Sigma}$ is similar{\color{black}, and $(\bm{x})_{[k]}$ denotes the $k$th largest element of $\bm{x}$.}
\end{theorem}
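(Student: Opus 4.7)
The plan is to verify that the proposed $(\lambda, \hat{\bm{\alpha}}, \bm{\beta})$ is a feasible dual solution to the separation problem \eqref{eqn:separation} whose value matches the primal optimum of \eqref{prob:sparsepcainnerprimal}, so that strong duality (already invoked in the proof of Theorem~\ref{thm:saddlepointtheorem}) forces optimality.

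First I would simplify the primal for fixed $\bm{z}$. The constraints $\vert X_{i,j}\vert \leq U_{i,j} \leq M_{i,j}z_i$ force $X_{i,j}=0$ whenever $z_i=0$, so after permuting the coordinates so that the selected indices come first the inner maximization collapses to $\max_{\bm{X}_{1,1}\succeq 0,\, \mathrm{tr}(\bm{X}_{1,1})=1}\langle \bm{\Sigma}_{1,1}, \bm{X}_{1,1}\rangle = \lambda_{\max}(\bm{\Sigma}_{1,1})$; the row-sum cap $\sum_j U_{i,j}\leq \sqrt{k}z_i$ is automatically satisfied by the rank-one optimizer $\bm{v}\bm{v}^\top$ since a unit $k$-sparse vector obeys $\Vert \bm{v}\Vert_1 \leq \sqrt{k}\Vert \bm{v}\Vert_2 = \sqrt{k}$ by Cauchy–Schwarz. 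Hence $f(\bm{z}) = \lambda_{\max}(\bm{\Sigma}_{1,1})$.

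Next I would verify dual feasibility. With the same permutation the dual slack $\lambda \mathbb{I} + \hat{\bm{\alpha}} - \bm{\Sigma}$ takes the block form
\[
\begin{pmatrix} \lambda \mathbb{I} - \bm{\Sigma}_{1,1} & -\bm{\Sigma}_{1,2} \\ -\bm{\Sigma}_{1,2}^\top & \bm{\Sigma}_{1,2}^\top(\lambda \mathbb{I} - \bm{\Sigma}_{1,1})^\dag \bm{\Sigma}_{1,2} \end{pmatrix},
\]
whose $(1,1)$-block is PSD by the choice $\lambda=\lambda_{\max}(\bm{\Sigma}_{1,1})$ and whose generalized Schur complement vanishes identically. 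Albert's generalized Schur-complement lemma then yields PSD-ness provided the range condition $\bm{\Sigma}_{1,2} \subseteq \mathrm{Range}(\lambda \mathbb{I} - \bm{\Sigma}_{1,1})$ holds. This is the main technical obstacle, since $\lambda \mathbb{I} - \bm{\Sigma}_{1,1}$ is singular at this $\lambda$; I would resolve it by a limiting argument, perturbing $\lambda$ slightly above $\lambda_{\max}(\bm{\Sigma}_{1,1})$ so that $\lambda \mathbb{I} - \bm{\Sigma}_{1,1}$ is invertible, evaluating the (now strictly feasible) dual and passing to the limit using closedness of the PSD cone and continuity of the dual objective.

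Finally I would evaluate the dual objective and choose $\bm{\beta}$. For $i$ with $z_i=1$, row $i$ of $\hat{\bm{\alpha}}$ is the zero row by construction, so the $i$-th summand reduces to $\sqrt{k}\beta_i$, which is minimized at $\beta_i=0$, matching $(1-z_i)(\cdot)=0$. For $i$ with $z_i=0$ the prefactor $z_i$ eliminates the summand from the dual value, but $\beta_i$ still shapes the supergradient $\bm{g}_{\hat{\bm{z}}}$ used inside Algorithm~\ref{alg:cuttingPlaneMethod}; minimizing the piecewise-linear convex map $\beta_i \mapsto \sum_j M_{i,j}\max(0, \vert \hat{\alpha}_{i,j}\vert-\beta_i)+\sqrt{k}\beta_i$ over $\beta_i\geq 0$ returns the weighted $\sqrt{k}$-quantile of $\{\vert \hat{\alpha}_{i,j}\vert\}_j$ with weights $M_{i,j}$. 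Writing the diagonal entry $\vert \hat{\alpha}_{i,i}\vert$ twice splits its weight $M_{i,i}=1$ into two copies of $\tfrac{1}{2}$ equal to the off-diagonal weights, so the quantile reduces to the $\ceil{\sqrt{k}\,}$-th order statistic of the augmented vector, reproducing the stated formula. Collecting, the dual value equals $\lambda = \lambda_{\max}(\bm{\Sigma}_{1,1}) = f(\bm{z})$, which by weak duality certifies optimality.
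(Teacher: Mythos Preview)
Your overall strategy—compute $f(\bm{z})=\lambda_{\max}(\bm{\Sigma}_{1,1})$ directly from the primal, verify dual feasibility via the generalized Schur complement, then optimize $\bm{\beta}$ as a weighted order statistic—mirrors the paper's proof almost step for step, including the nice observation that doubling the diagonal entry $\vert\hat{\alpha}_{i,i}\vert$ converts the weighted quantile into a plain order statistic. The one substantive divergence is your treatment of the Schur range condition $\mathrm{Range}(\bm{\Sigma}_{1,2})\subseteq\mathrm{Range}(\lambda\mathbb{I}-\bm{\Sigma}_{1,1})$, and there your limiting argument has a gap.

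The problem is that the pseudoinverse is discontinuous at rank drops: as $\epsilon\downarrow 0$, $((\lambda+\epsilon)\mathbb{I}-\bm{\Sigma}_{1,1})^{-1}$ diverges along the leading eigenspace of $\bm{\Sigma}_{1,1}$ rather than converging to $(\lambda\mathbb{I}-\bm{\Sigma}_{1,1})^{\dag}$. Hence the perturbed block $\hat{\bm{\alpha}}_{2,2}(\epsilon)=\bm{\Sigma}_{2,2}-(\lambda+\epsilon)\mathbb{I}+\bm{\Sigma}_{1,2}^\top((\lambda+\epsilon)\mathbb{I}-\bm{\Sigma}_{1,1})^{-1}\bm{\Sigma}_{1,2}$ can blow up, and closedness of the PSD cone cannot be invoked to certify feasibility of the \emph{specific} $(\lambda,\hat{\bm{\alpha}})$ in the statement; at best your perturbation shows the dual infimum equals $\lambda_{\max}(\bm{\Sigma}_{1,1})$, not that this particular point attains it. The paper closes the gap differently: it first uses complementary slackness—observing that for $z_i=1$ the constraints $\vert X_{i,j}\vert\leq M_{i,j}$ and $\sum_j\vert X_{i,j}\vert\leq\sqrt{k}$ are implied by the remaining constraints, so their multipliers vanish in some optimal dual, which forces $\hat{\alpha}_{i,j}=0$ on those rows—to establish that an optimal dual with the block-zero pattern \emph{exists}. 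Since conditions (1) and (2) of the Schur lemma do not involve $\hat{\bm{\alpha}}_{2,2}$, they must already hold for that existing solution, after which any $\hat{\bm{\alpha}}_{2,2}$ satisfying condition (3) is optimal. That indirect existence step is the missing ingredient in your plan.
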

\begin{remark}
By Theorem \ref{compefficientsubproblem}, Problem \eqref{eqn:separation} can be solved by computing the leading eigenvalue of $\bm{\Sigma}_{1,1}$ and solving a linear system. This justifies our claim that we need not solve any SDOs in our algorithmic strategy.
\end{remark}
\begin{proof}
We appeal to strong duality and complementary slackness. Observe that, for any $\bm{z} \in \{0, 1\}^p$, $f(\bm{z})$ is the optimal value of a maximization problem over a closed convex compact set. Therefore, there exists some optimal primal solution $\bm{X}^\star$ without loss of generality. Moreover, since the primal has non-empty relative interior {with respect to the non-affine constraints, it satisfies the Slater constraint qualification and} strong duality holds \citep[see, e.g.,][Chapter 5.2.3]{boyd2004convex}. Therefore, by complementary slackness \citep[see, e.g.,][Chapter 5.5.2]{boyd2004convex}, there must exist some dual-optimal solution $(\lambda, \hat{\bm{\alpha}}, \bm{\beta})$ which obeys complementarity with $\bm{X}^\star$. Moreover, $\vert X_{i,j}\vert \leq M_{i,j}$ is implied by $\mathrm{tr}(\bm{X})=1, \bm{X} \succeq \bm{0}$, while $\sum_{j=1}^p \vert X_{i,j}\vert \leq z_i \sqrt{k}$ is implied by $\vert X_{i,j}\vert
\leq M_{i,j}z_i$ and $\bm{e}^\top \bm{z} \leq k$. Therefore, by complementary slackness, we can take the constraints $\vert X_{i,j}\vert \leq M_{i,j}z_i$, {\color{black}$\sum_{j=1}^p \vert X_{i,j}\vert \leq z_i \sqrt{k}$} to be inactive when $z_i=1$ without loss of generality, which implies that $\hat{\alpha}_{i,j}^\star, {\color{black} \beta_i^\star}=0$ if $z_i=1$ in some dual-optimal solution. Moreover, we also have $\hat{\alpha}_{i,j}^\star=0$ if $z_j=1$, since $\hat{\bm{\alpha}}$ obeys the dual feasibility constraint $\lambda \mathbb{I}+\hat{\bm{\alpha}}\succeq \bm{\Sigma}$, and therefore is itself symmetric.

Next, observe that, by strong duality, $\lambda=\lambda_{\max}(\bm{\Sigma}_{1,1})$ in this dual-optimal solution, since $\bm{\alpha}$ only takes non-zero values if $z_i=z_j=0$ and does not contribute to the objective{\color{black}, and $\bm{\beta}$ is similar}.

Next, observe that, by strong duality and complementary slackness, any dual feasible $(\lambda, \hat{\bm{\alpha}}, {\color{black} \bm{\beta}})$ satisfying the above conditions is dual-optimal. Therefore, we need to find an $\hat{\bm{\alpha}}_{2,2}$ such that
\begin{align*}
     \begin{pmatrix}\lambda\mathbb{I} -\bm{\Sigma}_{1,1} & -\bm{\Sigma}_{1,2}\\ -\bm{\Sigma}_{2,1} & \lambda\mathbb{I}+\hat{\bm{\alpha}}_{2,2}-\bm{\Sigma}_{2,2}
    \end{pmatrix}\succeq \bm{0}.
\end{align*}
By the generalized Schur complement lemma \citep[see][Equation 2.41]{boyd1994linear}, this is PSD if and only if
\begin{enumerate}\itemsep0em
    \item $\lambda\mathbb{I} -\bm{\Sigma}_{1,1} \succeq \bm{0}$,
    \item $\left(\mathbb{I}-(\lambda\mathbb{I} -\bm{\Sigma}_{1,1})(\lambda\mathbb{I} -\bm{\Sigma}_{1,1})^\dag\right) \bm{\Sigma}_{1,2}=\bm{0}$, and
    \item $\lambda \mathbb{I}+\hat{\bm{\alpha}}_{2,2}-\bm{\Sigma}_{2,2}\succeq \bm{\Sigma}_{1,2}^\top \left(\lambda \mathbb{I}-\bm{\Sigma}_{1,1}\right)^\dag \bm{\Sigma}_{1,2}$.
\end{enumerate}
The first two conditions hold {because, as argued above, $\lambda$ is optimal and therefore feasible, and the conditions are independent of $\hat{\bm{\alpha}}_{2,2}$}. Therefore, it suffices to pick $\hat{\bm{\alpha}}_{2,2}$ in order that the third condition holds. We achieve this by setting $\hat{\bm{\alpha}}_{2,2}$ so the PSD constraint in condition (3) holds with equality.

Finally, let us {\color{black} optimize for} $\bm{\beta}$ to obtain stronger cuts (when $z_i=0$ we can pick any feasible $\beta_i$, but optimizing to set $\partial f(\bm{z})_i$ to be as small as possible gives stronger cuts). {\color{black} This is equivalent to solving the univariate minimization problem for each $\beta_i$:
\begin{align*}
    \min_{\beta_i} \left(\sum_{j=1}^p M_{i,j}\max(0, \vert \alpha_{i,j}\vert-\beta_i)+{\color{black}\sqrt{k}\beta_i}\right).
\end{align*}
Moreover, it is a standard result from max-$k$ optimization \citep[see, e.g.,][]{zakeri2014optimization, todd2018max} that
this is achieved by setting $\beta_i$ to be the $\lceil \sqrt{k}\ \rceil$ largest element of $\{{\alpha}_{i,j}\}_{j \in [p]} \cup \{\alpha_{i,i}\}$ in absolute magnitude, where we include $\alpha_{i,i}$ twice since $M_{i,i}=1$ while $M_{i,j}=1/2$ if $j \neq i$. }
\end{proof}

\subsection{Strengthening the Master Problem via the Gershgorin Circle Theorem} \label{sec:exact.circle}
To accelerate Algorithm \ref{alg:cuttingPlaneMethod}, 
we strengthen the master problem by imposing bounds from the circle theorem. Formally, we have the following result, which can be deduced from \citep[Theorem 6.1.1]{horn1990matrix}:
\begin{theorem}\label{thm:circletheorem}
For any vector $\bm{z} \in \{0, 1\}^p$ we have the following upper bound on $f(\bm{z})$
\begin{align}
    f(\bm{z}) \leq \max_{j \in [p]: z_j=1}\sum_{i \in [p]}z_i \vert \Sigma_{i,j}\vert.
\end{align}
\end{theorem}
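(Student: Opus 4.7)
The plan is to reduce $f(\bm{z})$ to a leading eigenvalue computation on a principal submatrix of $\bm{\Sigma}$ and then apply the standard Gershgorin Circle Theorem. First I would handle the trivial case $\bm{z}=\bm{0}$ separately (where, as noted after Theorem \ref{thm:saddlepointtheorem}, $f(\bm{0})=-\infty$ so the claim holds vacuously), and henceforth assume $\bm{e}^\top \bm{z}\geq 1$.

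Next, writing $\mathcal{S}(\bm{z})=\{i\in[p]:z_i=1\}$, I would observe that for $\bm{z}$ fixed the inner maximization in \eqref{misdpprimal} reduces to maximizing $\langle \bm{\Sigma},\bm{X}\rangle$ over $\bm{X}\succeq\bm{0}$ with $\mathrm{tr}(\bm{X})=1$ and $X_{i,j}=0$ whenever $i$ or $j$ lies outside $\mathcal{S}(\bm{z})$ (the off-diagonal big-$M$ constraints and the $\sqrt{k}$ cardinality inequality are redundant at optimality once we fix $\bm{z}$, because they only restrict entries that are already forced to zero). Therefore $f(\bm{z})=\lambda_{\max}(\bm{\Sigma}_{\mathcal{S}(\bm{z}),\mathcal{S}(\bm{z})})$, which is exactly the value of $\lambda$ produced by Theorem \ref{compefficientsubproblem}.

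Then I would apply the classical Gershgorin Circle Theorem \citep[Theorem 6.1.1]{horn1990matrix} to the symmetric submatrix $\bm{\Sigma}_{\mathcal{S}(\bm{z}),\mathcal{S}(\bm{z})}$: every eigenvalue lies in the union of discs centered at $\Sigma_{j,j}$ with radii $\sum_{i\in\mathcal{S}(\bm{z}),\,i\neq j}|\Sigma_{i,j}|$ for $j\in\mathcal{S}(\bm{z})$, so in particular
\begin{align*}
f(\bm{z})\;=\;\lambda_{\max}(\bm{\Sigma}_{\mathcal{S}(\bm{z}),\mathcal{S}(\bm{z})})\;\leq\;\max_{j\in\mathcal{S}(\bm{z})}\Bigl(\Sigma_{j,j}+\sum_{i\in\mathcal{S}(\bm{z}),\,i\neq j}|\Sigma_{i,j}|\Bigr).
\end{align*}
Finally, using that $\bm{\Sigma}\succeq\bm{0}$ implies $\Sigma_{j,j}\geq 0$, so $\Sigma_{j,j}=|\Sigma_{j,j}|=z_j|\Sigma_{j,j}|$ when $z_j=1$, I would fold the diagonal term into the sum to recognize the right-hand side as $\max_{j:z_j=1}\sum_{i\in[p]}z_i|\Sigma_{i,j}|$, giving the claimed inequality.

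I do not expect a major obstacle here: the only slightly subtle step is justifying $f(\bm{z})=\lambda_{\max}(\bm{\Sigma}_{\mathcal{S}(\bm{z}),\mathcal{S}(\bm{z})})$, which could alternatively be read off directly from the closed-form dual solution in Theorem \ref{compefficientsubproblem} rather than re-derived from scratch. Everything else is a direct invocation of Gershgorin together with PSD-ness of $\bm{\Sigma}$.
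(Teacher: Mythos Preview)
Your proposal is correct and is precisely the deduction the paper has in mind: the paper does not spell out a proof but simply states that the result ``can be deduced from \citep[Theorem 6.1.1]{horn1990matrix},'' and your argument---identify $f(\bm{z})$ with $\lambda_{\max}$ of the principal submatrix (as confirmed by Theorem~\ref{compefficientsubproblem}) and then apply Gershgorin---is exactly that deduction. One minor quibble: your parenthetical that the big-$M$ and $\sqrt{k}$ constraints ``only restrict entries that are already forced to zero'' is not quite the right justification (they also constrain entries with $z_i=1$, but those bounds are implied by $\bm{X}\succeq\bm{0}$ and $\mathrm{tr}(\bm{X})=1$, as noted in the proof of Theorem~\ref{compefficientsubproblem}); since you already point to Theorem~\ref{compefficientsubproblem} as an alternative route, this does not affect the validity of your argument.
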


Observe that this bound cannot be used to \textit{directly} strengthen Algorithm \ref{alg:cuttingPlaneMethod}'s master problem, since the bound is not convex in $\bm{z}$. Nonetheless, it can be successfully applied if we (a) impose a big-M assumption on Problem \eqref{OriginalSPCA}'s optimal objective and (b) introduce $p$ additional binary variables $\bm{s} \in \{0, 1\}^p: \bm{e}^\top \bm{s}=1$ {which model whether the $i$th Gershgorin disc is active; recall that each eigenvalue is contained in the union of the discs}. Formally, we impose the following valid inequalities in the master problem:
\begin{align}\label{eqn:gershgorincircle}
\exists \bm{s} \in \{0, 1\}^p: \ & \theta \leq \sum_{i \in [p]} z_i \vert \Sigma_{i,j}\vert+M(1-s_j) \ \forall j \in [p], \bm{e}^\top \bm{s}=1, \bm{s} \leq \bm{z},
\end{align}
{ where $\theta$ is the epigraph variable maximized in the master problem stated in Algorithm \ref{alg:cuttingPlaneMethod}{, and $M$ is an upper bound on the sum of the $k$ largest absolute entries in any column of $\bm{\Sigma}$.} Note that we set $\bm{s}\leq \bm{z}$ since if $z_i=0$ the $i$th column of $\bm{\Sigma}$ does not feature in the relevant submatrix of $\bm{\Sigma}$.}
In the above inequalities, a valid $M$ is given by any bound on the optimal objective. Since Theorem {\color{black}\ref{thm:circletheorem}} supplies one such bound for any given $\bm{z}$, we can compute \begin{align}
    M:=\max_{j \in [p]}\max_{\bm{z} \in \{0, 1\}^p: \bm{e}^\top \bm{z} \leq k} \ \sum_{i \in [p]}z_i \vert \Sigma_{i,j}\vert,
\end{align} {which can be done in $O(p^2)$ time.  }

{\color{black}To further improve Algorithm \ref{alg:cuttingPlaneMethod}, we also make use of the Gershgorin circle theorem before generating each outer-approximation cut. Namely, at a given node in a branch-and-bound tree, there are indices $i$ where $z_i$ has been fixed to $1$, indices $i$ where $z_i$ has been fixed to $0$, and indices $i$ where $z_i$ has not yet been fixed. Accordingly, we compute the worst-case Gershgorin bound—by taking the worst-case bound over each index $j$ such that $z_j$ has not yet been fixed to $0$, i.e., $$\max_{j: z_j\neq 0}\left\{\max_{\bm{s} \in \{0, 1\}^p: \bm{e}^\top \bm{s} \leq k}\left\{\sum_{i \in [p]}s_i \vert \Sigma_{i,j}\vert \ \text{s.t.} \ s_i=0\ \text{if}\ z_i=0, s_i=1 \ \text{if}\ z_i=1\right\}\right\}.$$ If this bound is larger than our incumbent solution then we generate an outer-approximation cut, otherwise the entire subtree rooted at this node does not contain an optimal solution and we use instruct the solver to avoid exploring this node via a \verb|callback|.}

{
Our numerical results in Section \ref{sec:numres} echo the empirical findings of \citet{berk2017} and indicate that Algorithm \ref{alg:cuttingPlaneMethod} performs substantially better when the Gershgorin bound is supplied in the master problem. Therefore, it is interesting to theoretically investigate the tightness, or at least the quality, of Gershgorin's bound. 
We supply some results in this direction in the following proposition:

\begin{proposition}\label{prop:gershgorinthmapprox}
Suppose that $\bm{\Sigma}$ is a scaled diagonally dominant matrix as defined by \cite{boman2005factor}, i.e., there exists some vector $\bm{d}>0$ such that $$d_i\Sigma_{i,i} \geq \sum_{j \in [p]: j \neq i}d_j\vert \Sigma_{i,j}\vert \ \forall i \in [p].$$ Then, letting $\rho:=\max_{i,j \in [p]} \{\frac{d_i}{d_j}\}$, the Gershgorin circle theorem provides a $(1+\rho)$-factor approximation, i.e., \begin{align}
    f(\bm{z}) \leq \max_{j \in [p]}\left\{\sum_{i \in [p]} z_i \vert \Sigma_{i,j}\vert \right\}\leq (1+\rho) f(\bm{z}) \quad \forall \bm{z} \in \{0, 1\}^p.
\end{align}
\end{proposition}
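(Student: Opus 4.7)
The plan is to prove the two inequalities independently. The left inequality follows immediately from Theorem \ref{thm:circletheorem}: since that statement already restricts the outer maximum to indices $j$ with $z_j=1$, enlarging the index set to all of $[p]$ only produces a larger quantity, so the chain $f(\bm{z}) \leq \max_{j: z_j=1}\sum_i z_i |\Sigma_{i,j}| \leq \max_{j \in [p]}\sum_i z_i |\Sigma_{i,j}|$ gives the desired bound.

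For the right inequality, my intention is to fix any index $j^\star$ achieving the maximum (and argue without loss of generality that $z_{j^\star}=1$, since this is the case of interest from Theorem \ref{thm:circletheorem}) and split the sum as
\[
\sum_{i \in [p]} z_i \,\vert \Sigma_{i,j^\star}\vert \;=\; \Sigma_{j^\star,j^\star} \;+\; \sum_{i \neq j^\star,\, z_i=1} \vert \Sigma_{i,j^\star}\vert.
\]
The scaled diagonal dominance hypothesis applied to row $j^\star$ reads $d_{j^\star}\Sigma_{j^\star,j^\star} \geq \sum_{i \neq j^\star} d_i \vert \Sigma_{i,j^\star}\vert$. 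To convert this into an unweighted statement, I would rewrite $\sum_{i \neq j^\star,\, z_i=1}\vert \Sigma_{i,j^\star}\vert = \sum_{i \neq j^\star, z_i=1} \tfrac{1}{d_i}\cdot d_i \vert\Sigma_{i,j^\star}\vert$, pull out $1/\min_i d_i$, and apply the SDD inequality to obtain
\[
\sum_{i \neq j^\star,\, z_i=1} \vert \Sigma_{i,j^\star}\vert \;\leq\; \frac{d_{j^\star}}{\min_i d_i}\,\Sigma_{j^\star,j^\star} \;\leq\; \rho\,\Sigma_{j^\star,j^\star}.
\]
Combining with the diagonal term gives $\sum_i z_i \vert \Sigma_{i,j^\star}\vert \leq (1+\rho)\,\Sigma_{j^\star,j^\star}$.

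To finish, I would invoke the elementary fact that a diagonal entry of a PSD matrix is bounded by its largest eigenvalue: since $z_{j^\star}=1$, $\Sigma_{j^\star,j^\star}$ appears on the diagonal of the PSD principal submatrix $\bm{\Sigma}_{\bm{z}}$, so $\Sigma_{j^\star,j^\star} \leq \lambda_{\max}(\bm{\Sigma}_{\bm{z}}) = f(\bm{z})$. Substituting closes the chain.

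The main obstacle I anticipate is the bookkeeping needed to translate the \emph{scaled} dominance inequality, which naturally carries ratios $d_i/d_j$, into a clean bound involving only the uniform constant $\rho$; the key trick is to reweight the off-diagonal sum by $1/d_i$ and extract the worst-case factor $1/\min_i d_i$. A subtler technical point is the case $z_{j^\star}=0$: the same SDD argument still yields $\sum_{i: z_i=1}\vert\Sigma_{i,j^\star}\vert \leq \rho\,\Sigma_{j^\star,j^\star}$, but relating $\Sigma_{j^\star,j^\star}$ to $f(\bm{z})$ no longer follows from a direct diagonal comparison and would require an auxiliary step (for instance, enlarging the support to $\bm{z}\cup\{j^\star\}$ and comparing eigenvalues, or invoking the PSD inequality $\vert\Sigma_{i,j^\star}\vert^2 \leq \Sigma_{i,i}\Sigma_{j^\star,j^\star}$ together with the fact that $f(\bm{z}) \geq \max_{i: z_i=1}\Sigma_{i,i}$).
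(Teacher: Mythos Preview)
Your approach is essentially the same as the paper's: split off the diagonal term, use scaled diagonal dominance to bound the off-diagonal sum by $\rho\,\Sigma_{j^\star,j^\star}$ (the paper multiplies each summand by $\rho\, d_i/d_{j^\star}\geq 1$ rather than extracting $1/\min_i d_i$, but this is cosmetic), and finish with $\Sigma_{j^\star,j^\star}\leq f(\bm{z})$ since the leading eigenvalue of a PSD matrix dominates every diagonal entry.

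Your caution about the case $z_{j^\star}=0$ is well placed and in fact goes further than the paper's own proof, which silently handles only indices $j$ with $z_j=1$ by carrying the factor $z_j\Sigma_{j,j}$ throughout. The suggested auxiliary steps will not rescue the unrestricted statement: take
\[
\bm{\Sigma}=\begin{pmatrix}1&0&2\\0&1&2\\2&2&8\end{pmatrix},
\]
which is PSD (eigenvalues $0,1,9$) and scaled diagonally dominant with $\bm{d}=(2,2,1)$, hence $\rho=2$. For $\bm{z}=(1,1,0)$ one has $f(\bm{z})=1$, yet $\sum_i z_i|\Sigma_{i,3}|=4>3=(1+\rho)f(\bm{z})$. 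So the inequality with the outer maximum taken over all $j\in[p]$ is actually false; the correct and useful version restricts the maximum to $\{j:z_j=1\}$, which is precisely what Theorem~\ref{thm:circletheorem} supplies, and for that version both your argument and the paper's go through verbatim.
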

{\color{black}
\begin{remark}
Observe that, for a fixed $\bm{z}$, the ratio $\rho:=\max_{i,j \in [p]} \{\frac{d_i}{d_j}\}$ need only be computed over indices $i,j$ such that $z_i,z_j=1$. Moreover, for a partially specified $\bm{z}$—which might arise at an intermediate node in a branch-and-bound tree generated by Algorithm \ref{alg:cuttingPlaneMethod}—the ratio $\rho$ need only be computed over indices $i$ where $z_i$ is unspecified or set to $1$. This suggests that the quality of the Gershgorin bound improves upon branching.
\end{remark}
}
\begin{remark}
In particular, if $\bm{\Sigma} \in S^n_+$ is a diagonal matrix, then {\color{black}Equation \eqref{eqn:gershgorincircle}'s} bound is tight - which follows from the fact that the spectrum of $\bm{\Sigma}$ and the discs coincide if and only if $\bm{\Sigma}$ is diagonal \citep[see, e.g,][Chapter 6]{horn1990matrix}. Alternatively, if $\bm{\Sigma}$ is a diagonally dominant matrix then $\rho=1$ and the Gershgorin circle theorem provides a $2-$factor approximation.
\end{remark}
\begin{proof}
Scaled diagonally dominant matrices have scaled diagonally dominant principal minors—this is trivially true because $$d_i\Sigma_{i,i} \geq \sum_{j \in [p]: j \neq i}d_j\vert \Sigma_{i,j}\vert \ \forall i \in [p]\implies d_i\Sigma_{i,i} \geq \sum_{j \in [p]: j \neq i}d_j z_j\vert \Sigma_{i,j}\vert \ \forall i \in [p]: z_i=1$$for the same vector $\bm{d}>\bm{0}$ and therefore the following chain of inequalities holds
\begin{align*}
    f(\bm{z}) \leq & \max_{j \in [p]}\{\sum_{i \in [p]} z_i \vert \Sigma_{i,j}\vert \}=\max_{j \in [p]}\{z_j\Sigma_{j,j}+ \sum_{i \in [p]: j \neq i}z_i\vert \Sigma_{i,j}\vert \}\\
    & \leq \max_{j \in [p]}\{z_j\Sigma_{j,j}+\sum_{i \in [p]: j \neq i}\rho \frac{d_i}{d_j}z_i\vert \Sigma_{i,j}\vert\}\leq (1+\rho)\max_{j \in [p]}\{z_j\Sigma_{j,j}\}\leq (1+\rho) f(\bm{z}) \quad \forall \bm{z} \in \{0, 1\}^p, \end{align*}
where the second inequality follows because $\rho\geq \frac{d_i}{d_j}$, the third inequality follows from the scaled diagonal dominance of the principal submatrices of $\bm{\Sigma}$, and the fourth inequality holds because the leading eigenvalue of a PSD matrix is at least as large as each diagonal.
\end{proof}


}

To make clear the extent our numerical success depends upon Theorem \ref{thm:circletheorem}, our results in Section \ref{sec:numres} present implementations of Algorithm \ref{alg:cuttingPlaneMethod} both with and without the bound.

{
\subsection{Beyond Gershgorin: Further Strengthening via Brauer's Ovals of Cassini} \label{sec:exact.oval}
Given the relevance of Gershgorin's bound, we propose, in this section,  a stronger —yet more expensive to implement— upper bound, based on an generalization of the Gershgorin Circle theorem, namely Brauer's ovals of Cassini.

First, we derive a new upper-bound on $f(\bm{z})$ that is at least as strong as the one presented in Theorem \ref{thm:circletheorem} and often strictly stronger \citep[][Chapter 6]{horn1990matrix}:
\begin{theorem}\label{thm:cassini1}
For any vector $\bm{z} \in \{0, 1\}^p$, we have the following upper bound on $f(\bm{z})$:
\begin{align} \label{eqn:bound.ovals}
    f(\bm{z}) \leq  \max_{i,j \in [p]: i>j, z_i=z_j=1} \left\{\frac{\Sigma_{i,i}+\Sigma_{j,j}}{2}+\frac{\sqrt{(\Sigma_{i,i}-\Sigma_{j,j})^2+4R_i(\bm{z}) R_j(\bm{z})}}{2}\right\},
\end{align}
where $R_i(\bm{z}):=\sum_{j \in [p]: j \neq i} z_j \vert \Sigma_{i,j}\vert$ is the absolute sum of off-diagonal entries in the $i$th column of the submatrix of $\bm{\Sigma}$ induced by $\bm{z}$.
\end{theorem}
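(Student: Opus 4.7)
The plan is to reduce the statement to an eigenvalue bound on a principal submatrix of $\bm{\Sigma}$, and then apply the classical Brauer ovals of Cassini inclusion theorem to that submatrix.

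First, I would observe that for any fixed feasible $\bm{z}$, the value $f(\bm{z})$ equals $\lambda_{\max}(\bm{\Sigma}_{1,1})$, where $\bm{\Sigma}_{1,1}$ denotes the principal submatrix of $\bm{\Sigma}$ indexed by $\{i \in [p] : z_i = 1\}$. This is immediate from the primal form of the subproblem \eqref{prob:sparsepcainnerprimal}, since constraining $\vert X_{i,j}\vert \leq M_{i,j} z_i$ together with $\bm{X} \succeq \bm{0}$ forces the support of $\bm{X}$ to lie within the indices selected by $\bm{z}$, and was already established as a byproduct of Theorem \ref{compefficientsubproblem}.

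Second, I would invoke Brauer's theorem \citep[][Theorem 6.4.7]{horn1990matrix}: every eigenvalue $\mu$ of $\bm{\Sigma}_{1,1}$ lies in at least one Cassini oval, i.e., there exist indices $i \neq j$ with $z_i = z_j = 1$ such that
\begin{align*}
\vert \mu - \Sigma_{i,i}\vert \cdot \vert \mu - \Sigma_{j,j}\vert \leq R_i(\bm{z}) R_j(\bm{z}),
\end{align*}
where $R_i(\bm{z})$ is precisely the deleted absolute row sum of the $i$th row of $\bm{\Sigma}_{1,1}$, as defined in the theorem. Applying this to $\mu = \lambda_{\max}(\bm{\Sigma}_{1,1}) = f(\bm{z})$ yields a pair $(i,j)$ satisfying this quadratic inequality.

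Third, I would use positive semidefiniteness to drop the absolute values. Since $\bm{\Sigma} \succeq \bm{0}$, every principal submatrix is PSD, and in particular $\lambda_{\max}(\bm{\Sigma}_{1,1})$ is at least as large as every diagonal entry $\Sigma_{i,i}$ with $z_i=1$. Therefore $f(\bm{z}) \geq \max(\Sigma_{i,i},\Sigma_{j,j})$, so the Brauer inclusion becomes
\begin{align*}
(f(\bm{z}) - \Sigma_{i,i})(f(\bm{z}) - \Sigma_{j,j}) \leq R_i(\bm{z}) R_j(\bm{z}).
\end{align*}
Expanding this as a quadratic inequality in $f(\bm{z})$ and applying the quadratic formula (taking the larger root, since $f(\bm{z})$ dominates both diagonals) gives the claimed bound for this specific pair $(i,j)$. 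Taking the maximum over all admissible pairs with $i > j$ and $z_i = z_j = 1$ yields \eqref{eqn:bound.ovals}.

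The only subtlety is justifying that both $\Sigma_{i,i}$ terms may be removed from the absolute value; this is handled uniformly by the PSD argument above. The rest is routine algebra, so I do not anticipate the proof requiring any substantial new ingredient beyond the Brauer inclusion and the PSD lower bound on the leading eigenvalue.
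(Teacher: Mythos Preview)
Your proposal is correct and follows essentially the same route as the paper: apply Brauer's ovals of Cassini to the principal submatrix $\bm{\Sigma}_{1,1}$ induced by $\bm{z}$, use positive semidefiniteness to ensure $f(\bm{z})=\lambda_{\max}(\bm{\Sigma}_{1,1})\geq\Sigma_{i,i}$ so the absolute values drop, and then solve the resulting quadratic. The only cosmetic difference is that you first explicitly identify $f(\bm{z})=\lambda_{\max}(\bm{\Sigma}_{1,1})$ via Theorem~\ref{compefficientsubproblem}, whereas the paper states Brauer's inclusion for a generic PSD matrix and then restricts to the submatrix at the end.
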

\begin{proof}
Let us first recall that, per \citet{brauer1946limits}'s original result, all eigenvalues of a matrix $\bm{\Sigma} \in S^p_+$ are contained in the union of the following $p(p-1)/2$ ovals of Cassini:
\begin{align*}
    \bigcup_{i \in [p], j \in [p]: i < j} \left\{\lambda \in \mathbb{R}_+: \vert \lambda-\Sigma_{i,i}\vert \vert \lambda-\Sigma_{j,j}\vert \leq R_i R_j \right\},
\end{align*}
where $R_i:=\sum_{j \in [p]: j \neq i} \vert \Sigma_{i,j}\vert$ is the absolute sum of off-diagonal entries in the $i$th column of $\bm{\Sigma}$. Next, let us observe that, if $\lambda$ is a dominant eigenvalue of a PSD matrix $\bm{\Sigma}$ then $\lambda \geq \Sigma_{i,i} \ \forall i $ and, in the $(i,j)$th oval, the bound reduces to
\begin{align}\label{eqn:dominanteigenvaluebound}
    \lambda^2 -\lambda(\Sigma_{i,i}+\Sigma_{j,j})+\Sigma_{i,i}\Sigma_{j,j}-R_i R_j \leq 0,
\end{align}
which, by the quadratic formula, implies an upper bound is $\frac{\Sigma_{i,i}+\Sigma_{j,j}}{2}+\frac{\sqrt{(\Sigma_{i,i}-\Sigma_{j,j})^2+4R_i R_j}}{2}$. The result follows because if $z_i=0$ the $i$th row of $\bm{\Sigma}$ cannot be used to bound $f(\bm{z})$.
\end{proof}

Theorem \ref{thm:cassini1}'s inequality can be enforced numerically as mixed-integer second order cone constraints. Indeed, the square root term in \eqref{eqn:bound.ovals} can be modeled using second-order cone, and the bilinear terms only involve binary variables and can be linearized.
Completing the square in Equation \eqref{eqn:dominanteigenvaluebound}, \eqref{eqn:bound.ovals} is equivalent to the following system of $p(p-1)/2$ mixed-integer second-order cone inequalities:
\begin{align*}
    \left(\theta-\frac{1}{2}(\Sigma_{i,i}+\Sigma_{j,j})\right)^2 \leq \sum_{s,t \in [p]: s\neq i, t \neq j} W_{s,t}\vert \Sigma_{i,s}\Sigma_{j,t}\vert -\frac{3}{4}\Sigma_{i,i}\Sigma_{j,j}+M(1-s_{i,j}) \ \forall i,j \in [p]: i <j,\\
    \sum_{i,j \in [p]: i<j} s_{i,j}=1, s_{i,j} \leq \min(z_i, z_j) \ i,j \in [p]: i<j, \ s_{i,j} \in \{0, 1\} \ i,j \in [p]: i<j.
\end{align*}
where $W_{i,j}=z_i z_j$ is a product of binary variables which can be modeled using, e.g., the {\color{black}McCormick} inequalities $\max(0, z_i+z_j-1) \leq W_{i,j} \leq \min(z_i, z_j)$, and $M$ is an upper bound on the right-hand-side of the inequality for any $i,j: i \neq j$, which can be computed in $O(p^3)$ time in much the same manner as a big-$M$ constant was computed in the previous section. Note that we do not make use of these inequalities directly in our numerical experiments, due to their high computational cost. However, an interesting extension would be to introduce the binary
variables dynamically, via branch-and-cut-and-price \citep{barnhart1998branch}.

Since the bound derived from the ovals of Cassini (Theorem  \ref{thm:cassini1}) is at least as strong as the Gershgorin circle's one (Theorem \ref{thm:circletheorem}), it satisfies the same approximation guarantee (Proposition \ref{prop:gershgorinthmapprox}). In particular, it is tight when $\bm{\Sigma}$ is diagonal and provides a $2-$factor approximation for diagonally dominant matrices.
Actually, we now prove a stronger result and demonstrate that Theorem \ref{thm:cassini1} provides a $2-$factor bound on $f(\bm{z})$ for doubly diagonally dominant matrices—a broader class of matrices than diagonally dominant matrices \citep[see][for a general theory]{li1997doubly}:
\begin{proposition}
Let $\bm{\Sigma}\in S^p_+$ be a doubly diagonally dominant matrix, i.e.,
\begin{align*}
    \Sigma_{i,i}\Sigma_{j,j}\geq R_i R_j \ \forall i,j \in [p]: i >j,
\end{align*}
where $R_i:=\sum_{j \in [p]: j \neq i} \vert \Sigma_{i,j} \vert$ is the sum of the off-diagonal entries in the $i$th column of $\bm{\Sigma}$. Then, we have that
\begin{align}
    f(\bm{z}) \leq  \max_{i,j \in [p]: i>j, z_i=z_j=1} \left\{\frac{\Sigma_{i,i}+\Sigma_{j,j}}{2}+\frac{\sqrt{(\Sigma_{i,i}-\Sigma_{j,j})^2+4R_i(\bm{z}) R_j(\bm{z})}\}}{2}\right\} \leq 2f(\bm{z}).
\end{align}
\end{proposition}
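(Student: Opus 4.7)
The left-hand inequality is exactly Theorem~\ref{thm:cassini1}, so the whole content of the claim is the right-hand inequality: the Brauer/Cassini oval bound is at most $2 f(\bm{z})$ for doubly diagonally dominant PSD $\bm{\Sigma}$. My plan is to reduce it to two elementary facts about the principal submatrix $\bm{\Sigma}(\bm{z})$ indexed by $\{i: z_i=1\}$, together with a single algebraic manipulation inside the square root. Throughout, fix the pair $(i,j)$ with $z_i=z_j=1$ that attains the maximum on the right-hand side.

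The first ingredient is the lower bound $f(\bm{z}) \geq \max\{\Sigma_{i,i}, \Sigma_{j,j}\}$. Since $f(\bm{z})$ equals $\lambda_{\max}(\bm{\Sigma}(\bm{z}))$ and $\bm{\Sigma}(\bm{z})$ is PSD, this follows immediately from the standard fact, already invoked in the proof of Proposition~\ref{prop:gershgorinthmapprox}, that the leading eigenvalue of a PSD matrix dominates each of its diagonal entries.

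The second ingredient is that the double diagonal dominance hypothesis is preserved for the submatrix induced by $\bm{z}$: since $R_i(\bm{z}) = \sum_{\ell \neq i} z_\ell |\Sigma_{i,\ell}| \leq R_i$, the inequality $\Sigma_{i,i}\Sigma_{j,j} \geq R_i R_j$ implies $\Sigma_{i,i}\Sigma_{j,j} \geq R_i(\bm{z}) R_j(\bm{z})$. With this in hand, the radical in the Brauer expression admits the estimate
\begin{equation*}
(\Sigma_{i,i}-\Sigma_{j,j})^2 + 4 R_i(\bm{z}) R_j(\bm{z}) \leq (\Sigma_{i,i}-\Sigma_{j,j})^2 + 4 \Sigma_{i,i}\Sigma_{j,j} = (\Sigma_{i,i}+\Sigma_{j,j})^2,
\end{equation*}
so that $\sqrt{(\Sigma_{i,i}-\Sigma_{j,j})^2 + 4R_i(\bm{z}) R_j(\bm{z})} \leq \Sigma_{i,i}+\Sigma_{j,j}$.

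Plugging this back into the Cassini bound gives, for the maximizing pair $(i,j)$,
\begin{equation*}
\frac{\Sigma_{i,i}+\Sigma_{j,j}}{2} + \frac{\sqrt{(\Sigma_{i,i}-\Sigma_{j,j})^2 + 4 R_i(\bm{z}) R_j(\bm{z})}}{2} \leq \Sigma_{i,i}+\Sigma_{j,j} \leq 2\max\{\Sigma_{i,i},\Sigma_{j,j}\} \leq 2 f(\bm{z}),
\end{equation*}
which is precisely the claimed $2$-factor bound. There is no real obstacle here: the only nonroutine step is recognizing that double diagonal dominance is exactly the hypothesis needed to convert the cross term $4 R_i R_j$ into a perfect square, and once that is observed the rest is arithmetic together with the PSD diagonal lower bound on $f(\bm{z})$.
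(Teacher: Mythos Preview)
Your proposal is correct and follows essentially the same route as the paper: the paper's proof consists of the single algebraic observation that $\Sigma_{i,i}\Sigma_{j,j}\geq R_iR_j$ turns the radicand into a perfect square, bounding the Cassini expression by $\Sigma_{i,i}+\Sigma_{j,j}$, and then defers the remaining steps (preservation of double diagonal dominance under restriction to the support of $\bm{z}$, and the PSD diagonal lower bound $f(\bm{z})\geq\max_i z_i\Sigma_{i,i}$) to ``the same fashion as Proposition~\ref{prop:gershgorinthmapprox}.'' You have simply written those deferred steps out explicitly.
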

\begin{proof}
Observe that if $\Sigma_{i,i}\Sigma_{j,j}\geq R_i R_j$ then $$\sqrt{(\Sigma_{i,i}-\Sigma_{j,j})^2+4 R_i R_j}\leq \sqrt{(\Sigma_{i,i}-\Sigma_{j,j})^2+4 \Sigma_{i,i}\Sigma_{j,j}}=\Sigma_{i,i}+\Sigma_{j,j}.$$ The result then follows in essentially the same fashion as Proposition \ref{prop:gershgorinthmapprox}.
\end{proof}

}

\section{Convex Relaxations and Rounding Methods}\label{sec:relaxandround}
For large-scale instances, high-quality solutions can be obtained by solving a convex relaxation of Problem  \eqref{misdpprimal} and rounding the optimal solution. In Section \ref{ssec:relax.bool}, we propose relaxing $\bm{z} \in \{0, 1\}^p$ in \eqref{misdpprimal} to $\bm{z} \in [0, 1]^p$ and applying a greedy rounding scheme. We further tighten this relaxation using second-order cones constraints in Section \ref{ssec:validineq}.

\subsection{A Boolean Relaxation and a Greedy Rounding Method} \label{ssec:relax.bool}
We first consider a Boolean relaxation of \eqref{misdpprimal}, which we obtain\footnote{\color{black}Note that we omit the {\color{black}$\sum_{j=1}^p \vert X_{i,j}\vert \leq \sqrt{k} z_i$} constraints when we develop our convex relaxations, since they are essentially dominated by the $\Vert \bm{X}\Vert_1 \leq k$ constraint we introduce in the next section; we introduced these inequalities to improve our semidefinite-free subproblem strategy for the exact method.} by relaxing $\bm{z} \in \{0, 1\}^p$ to $\bm{z} \in [0, 1]^p$. This gives $\displaystyle \max_{\bm{z} \in [0, 1]^p: \bm{e}^\top \bm{z} \leq k} \: f(\bm{z})$, {\color{black}i.e.},
\begin{equation}
    \begin{aligned}\label{prob:lprelax2}
    \max_{\substack{\bm{z} \in [0, 1]^p}: \bm{e}^\top \bm{z} \leq k} \: \max_{\bm{X} \succeq \bm{0}} \quad & \langle \bm{\Sigma}, \bm{X} \rangle \ \text{s.t.} \ \mathrm{tr}(\bm{X})=1, \vert X_{i,j}\vert \leq M_{i,j}z_i\ \forall i,j \in [p].
\end{aligned}
\end{equation}
A useful strategy for obtaining a high-quality feasible solution is to solve \eqref{prob:lprelax2} and set $z_i=1$ for $k$ indices corresponding to the largest $\bm{z}_j$'s in \eqref{prob:lprelax2} {\color{black}as proposed in the randomized case for general integer optimization problems by \cite{raghavan1987randomized}}. We formalize this in Algorithm \ref{alg:greedymethod}. {\color{black}We remark that rounding strategies for sparse PCA have previously been proposed \citep[see][]{fountoulakis2017randomized, dey2017sparse, chowdhury2020approximation}, however, the idea of rounding $\bm{z}$ and then optimizing for $\bm{X}$ appears to be new.}
\begin{algorithm*}
\caption{A greedy rounding method for Problem \eqref{OriginalSPCA}}
\label{alg:greedymethod}
\begin{algorithmic}\normalsize
\REQUIRE Covariance matrix $\bm{\Sigma}$, sparsity parameter $k$
\STATE Compute $\bm{z}^\star$ solution of \eqref{prob:lprelax2} or \eqref{spca:sdpplussocp}
\STATE Construct $\bm{z} \in \{0, 1\}^p: \bm{e}^\top \bm{z}=k$ such that $z_i\geq z_j$ if $z^\star_i \geq z^\star_j$.
\STATE Compute $\bm{X}$ solution of \vspace{-2mm}
\begin{align*}
    \max_{\bm{X} \in S^p_+} \ \langle \bm{\Sigma}, \bm{X} \rangle \ \text{s.t.} \ \mathrm{tr}(\bm{X})=1, X_{i,j}=0 \ \text{if} \ z_i z_j=0 \ \forall i,j \in [p].
\end{align*}  \vspace{-5mm}
\RETURN $\bm{z}, \bm{X}$.
\end{algorithmic}
\end{algorithm*}

{
\begin{remark}\label{remark:scalability}
Our numerical results in Section \ref{sec:numres} reveal that explicitly imposing a PSD constraint on $\bm{X}$ in the relaxation \eqref{prob:lprelax2}—or the ones derived later in the following section—prevents our approximation algorithm from scaling to larger problem sizes than the exact Algorithm \ref{alg:cuttingPlaneMethod} can already solve. Therefore, to improve scalability, the semidefinite cone can be safely approximated via its second-order cone relaxation, $X_{i,j}^2 \leq X_{i,i}X_{j,j}\ \forall i,j \in [p]$, plus a small number of cuts of the form $\langle \bm{X}, \bm{x}_t\bm{x}_t^\top\rangle \geq 0$ as presented in \citet{bertsimas2019polyhedral}.
\end{remark}
}

{
\begin{remark}
Rather than relaxing and greedily rounding $\bm{z}$, one could consider a higher dimensional relax-and-round scheme where we let $\bm{Z}$ model the outer product $\bm{z}\bm{z}^\top$ via $\bm{Z}\succeq \bm{z}\bm{z}^\top$, $\max(0, z_i+z_j-1)\leq Z_{i,j} \leq \min(z_i, z_j) \ \forall i,j \in [p]$, $Z_{i,i}=z_i$, and require that $\sum_{i,j \in [p]}Z_{i,j} \leq k^2$. Indeed, a natural ``round'' component of such a relax-and-round scheme is precisely Goemans-Williamson rounding \citep[][]{goemans1995improved,bertsimas1998semidefinite}, which performs at least as well as greedy rounding in both theory and practice. Unfortunately, some preliminary numerical experiments indicated that Goemans-Williamson rounding is not actually much better than greedy rounding in practice, and is considerably more expensive to implement. Therefore, we defer the details of the Goemans-Williamson scheme to Appendix \ref{sec:relax.goemans}, and do not consider it any further in this paper.
\end{remark}

}

\subsection{Valid Inequalities for Strengthening Convex Relaxations}\label{ssec:validineq}
We now propose valid inequalities which allow us to improve the quality of the convex relaxations discussed previously. 
Note that as convex relaxations and random rounding methods are two sides of the same coin \citep{barak2014rounding}, applying these valid inequalities also improves the quality of the randomly rounded solutions. 

\begin{theorem}
Let $\mathcal{P}_{strong}$ denote the optimal objective value of the following problem:
\begin{equation}
\label{spca:sdpplussocp}
\begin{aligned}
    \max_{\bm{z} \in [0, 1]^p: \bm{e}^\top \bm{z} \leq k} \max_{\bm{X} \in S^p_+} \ \langle \bm{\Sigma}, \bm{X} \rangle \ \text{s.t.} \quad & \mathrm{tr}(\bm{X})=1,
    \vert X_{i,j}\vert \leq M_{i,j}z_i\ \forall i,j \in [p] ,\\
    &\sum_{j \in [p]} X_{i,j}^2 \leq X_{i,i}z_i, \Vert \bm{X}\Vert_1 \leq k.
\end{aligned}
\end{equation}
Then, \eqref{spca:sdpplussocp} is a stronger relaxation than \eqref{prob:lprelax2}, i.e., the following inequalities hold:
\begin{align*}
    \max_{\bm{z} \in [0, 1]^p: \bm{e}^\top \bm{z} \leq k} f(\bm{z})\geq \mathcal{P}_{strong} \geq \max_{\bm{z} \in \{0, 1\}^p: \bm{e}^\top \bm{z} \leq k} f(\bm{z}).
\end{align*}
Moreover, suppose an optimal solution to \eqref{spca:sdpplussocp} is of rank one. Then, the relaxation is tight: $$\mathcal{P}_{strong}= \max_{\bm{z} \in \{0, 1\}^p: \bm{e}^\top \bm{z} \leq k} f(\bm{z}).$$
\end{theorem}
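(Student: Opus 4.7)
The plan is to split the proof into three steps: first the upper inequality $\max_{\bm{z} \in [0,1]^p: \bm{e}^\top \bm{z}\leq k} f(\bm{z}) \geq \mathcal{P}_{strong}$, then the lower inequality $\mathcal{P}_{strong} \geq \max_{\bm{z} \in \{0,1\}^p: \bm{e}^\top \bm{z}\leq k} f(\bm{z})$, and finally the tightness claim under the rank-one hypothesis. The upper inequality is immediate: \eqref{spca:sdpplussocp} is obtained from the Boolean relaxation \eqref{prob:lprelax2} by appending the two families of cuts $\sum_j X_{i,j}^2 \leq X_{i,i} z_i$ and $\Vert \bm{X}\Vert_1 \leq k$, which only shrink the feasible set and hence cannot increase the maximum.

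For the lower inequality, I would lift any feasible $\bm{x}$ of \eqref{OriginalSPCA} to a feasible solution of \eqref{spca:sdpplussocp} with matching objective, and invoke Theorem \ref{thm:misdpreformthm} which identifies the MIP value with the optimum of \eqref{OriginalSPCA}. Concretely, set $\bm{X} := \bm{x}\bm{x}^\top$ and $z_i := \mathbbm{1}\{x_i \neq 0\}$. The constraints shared with \eqref{prob:lprelax2}, namely $\mathrm{tr}(\bm{X}) = \Vert \bm{x}\Vert_2^2 = 1$ and $|X_{i,j}| \leq M_{i,j} z_i$, are verified exactly as in the proof of Theorem \ref{thm:misdpreformthm}. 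The new cuts are also satisfied: by rank-one, $\sum_j X_{i,j}^2 = x_i^2 \Vert \bm{x}\Vert_2^2 = X_{i,i}$, which equals $X_{i,i} z_i$ when $x_i \neq 0$ and is trivially $0$ on both sides otherwise; and by Cauchy--Schwarz on $\mathrm{supp}(\bm{x})$, $\Vert \bm{X}\Vert_1 = \bigl(\sum_i |x_i|\bigr)^2 \leq \Vert \bm{x}\Vert_0 \cdot \Vert \bm{x}\Vert_2^2 \leq k$. Since the lifted objective is $\langle \bm{\Sigma}, \bm{X}\rangle = \bm{x}^\top \bm{\Sigma} \bm{x}$, this yields $\mathcal{P}_{strong} \geq \max_{\bm{z} \in \{0,1\}^p: \bm{e}^\top \bm{z}\leq k} f(\bm{z})$.

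Finally, suppose some optimizer $(\bm{X}^\star, \bm{z}^\star)$ of \eqref{spca:sdpplussocp} satisfies $\bm{X}^\star = \bm{x}^\star (\bm{x}^\star)^\top$. Then $\Vert \bm{x}^\star\Vert_2^2 = \mathrm{tr}(\bm{X}^\star) = 1$, and the strengthening cut specializes to $(x_i^\star)^2 \leq (x_i^\star)^2 z_i^\star$ for each $i$; this forces $z_i^\star = 1$ (using $z_i^\star \in [0,1]$) whenever $x_i^\star \neq 0$. Combined with $\bm{e}^\top \bm{z}^\star \leq k$, we obtain $\Vert \bm{x}^\star\Vert_0 \leq k$, so $\bm{x}^\star$ is feasible in \eqref{OriginalSPCA} with objective $(\bm{x}^\star)^\top \bm{\Sigma}\bm{x}^\star = \mathcal{P}_{strong}$. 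The sandwich then collapses to equality. The main conceptual subtlety is recognizing the role of the quadratic cut $\sum_j X_{i,j}^2 \leq X_{i,i} z_i$: in isolation it does not obviously force integrality, but once specialized to a rank-one candidate it collapses to a hard sparsity constraint on $\mathrm{supp}(\bm{x}^\star)$, which is precisely what converts rank-one optimality into a certificate that the continuous relaxation is tight.
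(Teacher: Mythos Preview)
Your proof is correct and follows essentially the same three-step structure as the paper's. The only minor variation is in the second inequality: you verify the new cuts directly on rank-one lifts $\bm{X}=\bm{x}\bm{x}^\top$ from \eqref{OriginalSPCA}, whereas the paper shows they are valid for every feasible $(\bm{X},\bm{z})$ of the MISDO \eqref{misdpprimal} (using the $2\times 2$ minor inequality $X_{i,j}^2\leq X_{i,i}X_{j,j}$ together with $\mathrm{tr}(\bm{X})=1$ to aggregate into $\sum_j X_{i,j}^2\leq X_{i,i}z_i$); both routes suffice once Theorem~\ref{thm:misdpreformthm} is invoked.
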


\begin{proof}
The first inequality $\max_{\bm{z} \in [0, 1]^p: \bm{e}^\top \bm{z} \leq k} f(\bm{z})\geq \mathcal{P}_{strong}$ is trivial. The second inequality holds because $\mathcal{P}_{strong}$ is indeed a valid relaxation of Problem \eqref{OriginalSPCA}. Indeed, $\| \bm{X} \|_1 \leq k$ follows from the cardinality and big-M constraints. The semidefinite constraint $\bm{X} \succeq 0$ impose second-order cone constraints on the $2\times 2$ minors of $\bm{X}$, $X_{i,j}^2 \leq z_i X_{i,i} X_{j,j}$, which can be aggregated into $\sum_{j \in [p]} X_{i,j}^2 \leq X_{i,i} z_i $ \citep[see][for derivations]{bertsimas2019polyhedral}.

Finally, suppose that an optimal solution to Problem \eqref{spca:sdpplussocp} is of rank one, i.e., the optimal matrix $\bm{X}$ can be decomposed as $\bm{X}=\bm{x}\bm{x}^\top$. Then, the SOCP inequalities imply that $\sum_{j \in [p]} x_i^2 x_j^2 \leq x_i^2 z_i$. However, $\sum_{j \in [p]}x_j^2=\mathrm{tr}(\bm{X})=1$, which implies that $x_i^2 \leq x_i^2 z_i$, i.e., $z_i=1$ for any index $i$ such that $\vert x_i\vert>0$. Since $\bm{e}^\top \bm{z} \leq k$, this implies that $\Vert \bm{x}\Vert_0 \leq k$, i.e., $\bm{X}$ also solves Problem \eqref{sdospca1}.
\end{proof}

{ As our numerical experiments will demonstrate and despite the simplicity of our rounding mechanism in Algorithm \ref{alg:greedymethod}, the relaxation \eqref{spca:sdpplussocp} provides high-quality solutions to the original sparse PCA problem \eqref{OriginalSPCA}, without introducing any additional variables.} {\color{black}We remark that other inequalities, including the second-order cone inequalities proposed in \citet[Lemma 2 (ii)]{li2020exact}, could further improve the convex relaxation; we leave integrating these inequalities within our framework as future work.}

\section{Numerical Results}\label{sec:numres}
We now assess the numerical behavior of the algorithms proposed in Section \ref{sec:reformulation} and \ref{sec:relaxandround}. To bridge the gap between theory and practice, we present a \verb|Julia| code which implements the described convex relaxation and greedy rounding procedure on GitHub\footnote{\href{github.com/ryancorywright/ScalableSPCA.jl}{https://github.com/ryancorywright/ScalableSPCA.jl}}. The code requires a conic solver such as \verb|Mosek| and several open source Julia packages to be installed.
\subsection{Performance of Exact Methods}
In this section, we apply Algorithm \ref{alg:cuttingPlaneMethod} to medium and large-scale sparse principal component analysis problems, with and without Gershgorin circle theorem bounds in the master problem. All experiments were implemented in \verb|Julia| $1.3$, using \verb|Gurobi| $9.1$ and \verb|JuMP.jl| $0.21.6$, and performed on a standard Macbook Pro laptop, with a $2.9$GHz $6$-Core Intel i9 CPU, using $16$ GB DDR4 RAM. We compare our approach to the branch-and-bound algorithm\footnote{\color{black}The solve times for their method, as reported here, differ from those reported in \citet{berk2017} due to a small typo in their implementation (line $110$ of their branchAndBound.jl code should read ``if $y[i]==-1$ $||$ $y[i]==1$'', not ``if $y[i]==-1$'' in order to correctly compute the Gershgorin circle theorem bound); correcting this is necessary to ensure that we obtain correct results from their method.} developed by \cite{berk2017} on the UCI \verb|pitprops|, \verb|wine|, \verb|miniboone|, \verb|communities|, \verb|arrythmia| and \verb|micromass| datasets, both in terms of runtime and the number of nodes expanded; we refer to \cite{berk2017, bertsimas2019polyhedral} for descriptions of these datasets. Note that we normalized all datasets before running the method (i.e., we compute the leading sparse principal components of correlation matrices). Additionally, we warm-start all methods with the solution from 
the method of \cite{yuan2013truncated}, to maintain a fair comparison.

Table \ref{tab:comparison} reports the time for Algorithm \ref{alg:cuttingPlaneMethod} (with and without Gershgorin circle theorem bounds in the master problem) and the method of \cite{berk2017} to identify the leading $k$-sparse principal component for {\color{black}$k \in \{5, 10, 20\}$}, along with the number of nodes expanded, and the number of outer approximation cuts generated. We impose a relative optimality tolerance of $10^{-3}$ for all approaches {\color{black}, i.e., terminate each method when $(UB-LB)/UB\leq 10^{-3}$ where $UB$ denotes the current objective bound and $LB$ denotes the current incumbent objective value}. Note that $p$ denotes the dimensionality of the correlation matrix, and $k \leq p$ denotes the target sparsity.

{
\begin{table}[h]
\centering\footnotesize
\caption{{\color{black}Runtime in seconds per approach. We impose a time limit of $600$s. If a solver fails to converge, we report the relative bound gap at termination in brackets.
}}
\begin{tabular}{@{}l l l r r r r r r r r r@{}} \toprule
Dataset &  $p$ & $k$ & \multicolumn{3}{c@{\hspace{0mm}}}{Alg. \ref{alg:cuttingPlaneMethod}} &  \multicolumn{3}{c@{\hspace{0mm}}}{Alg. \ref{alg:cuttingPlaneMethod}+ Circle Theorem} &  \multicolumn{2}{c@{\hspace{0mm}}}{Method of B.+B.} \\
\cmidrule(l){4-6} \cmidrule(l){7-9} \cmidrule(l){10-11} & &  & Time(s) & Nodes & Cuts & Time(s) & Nodes & Cuts & Time(s) & Nodes \\\midrule
 Pitprops & $13$ & $5$   & \color{black} $0.30$ & \color{black} $1,608$ & \color{black} $1,176$ & \color{black} $\textbf{0.06}$ & \color{black} $38$ & \color{black} $8$ & $1.49$ & $22$ \\
&      & $10$  & \color{black} $0.14$ &  \color{black} $414$ & \color{black} $387$ &  \color{black} $\textbf{0.02}$ &  \color{black} $18$ &  \color{black} $21$ & $\textbf{0.02}$ & $14$ \\\midrule
Wine & $13$ & $5$   & \color{black} $0.57$ & \color{black} $2,313$ & \color{black} $1,646$ & \color{black} $\textbf{0.02}$ & \color{black} $46$ & \color{black} $11$ & $0.04$ & $34$ \\
         &      & $10$  & \color{black} $0.17$ & \color{black} $376$ & \color{black} $311$ & $\color{black} 0.03$ & \color{black} $54$ & \color{black} $58$ & $\textbf{0.02}$ & $12$ \\\midrule
Miniboone & $50$ & $5$   & \color{black} $\textbf{0.01}$ & \color{black} $0$ & \color{black} $11$ & \color{black} $\textbf{0.01}$ & \color{black} $0$ & \color{black} $3$ & $0.04$ & $2$ \\
 &  & $10$   & \color{black} $\textbf{0.01}$ & \color{black} $0$ & \color{black} $16$ & \color{black} $0.02$ & \color{black} $0$ & \color{black} $3$ & $0.04$ & $2$ \\
 & &  \color{black} $20$ &  \color{black} $0.03$ &  \color{black} $0$ &  \color{black} $26$ &  \color{black} $\textbf{0.01}$ &  \color{black} $0$ &  \color{black} $3$ &  \color{black} $1.30$ &  \color{black} $5,480$ \\\midrule
 Communities & $101$ & $5$ & \color{black} ($2.87\%$) & \color{black} $28,462$ & \color{black} $25,483$ & \color{black} $\textbf{0.20}$ & \color{black} $201$ & \color{black} $3$ & $0.57$ & $101$ \\
   &  & $10$ & \color{black}($13.3\%$) & \color{black} $37,479$ & \color{black} $36,251$ & \color{black} $\textbf{0.34}$ & \color{black} $406$ & \color{black} $39$ & $0.94$ & $1,298$ \\
& &  \color{black} $20$ &  \color{black} ($39.6\%)$ &  \color{black} $24,566$ &  \color{black} $24,632$ &  \color{black} ($12.1\%)$ &  \color{black} $42,120$ &  \color{black} $37,383$  &  \color{black} $(\textbf{9.97\%})$ &  \color{black} $669,500$ \\\midrule
 Arrhythmia & $274$ & $5$ & \color{black} ($18.1\%$) & \color{black} $22,771$ & \color{black} $20,722$ & \color{black}$6.07$ & \color{black} $135$ & \color{black} $1,233$ & $\textbf{4.17}$ & $1,469$ \\
   & & $10$ & \color{black} ($32.6\%)$ & \color{black} $19,500$ & \color{black} $19,314$ & \color{black} ($2.92\%$) & \color{black} $15,510$ & \color{black} $6,977$ & $\textbf{(0.83\%)}$ & $471,680$ \\
    & &  \color{black} $20$ &  \color{black} $(74.4\%)$ &  \color{black} $33,773$ &  \color{black} $12,374$&  \color{black} ($24.3\%$) &  \color{black} $33,123$ &  \color{black} $19,662$ &  \color{black} $(\textbf{18.45\%})$ &  \color{black} $311,400$ \\\midrule
Micromass & $1300$ & $5$ & \color{black} $(1.29\%)$ & \color{black} $3,859$ & \color{black} $3,099$ & \color{black} $163.60$ & \color{black} $2,738$ & \color{black} $6$ & $\textbf{24.31}$ & $1,096$ \\
   & & $10$ & \color{black}  $(10.6\%)$ & \color{black}  $3,366$ & \color{black}  $3,369$ & \color{black} $\textbf{241.86}$ & \color{black} $3,233$ & \color{black} $121$ & $362.4$ & $36,690$ \\
    & &  \color{black}  \color{black} $20$ &  \color{black} ($35.9\%)$ &  \color{black} $2,797$ &  \color{black} $2,839$ &  \color{black} ($35.9\%$) &  \color{black} $2,676$ &  \color{black} $2,115$ &  \color{black} $(\textbf{10.34}\%)$ &  \color{black} $31,990$ \\
\bottomrule
\end{tabular}
\label{tab:comparison}
\end{table}
}

Our main findings from these experiments are as follows:
\begin{itemize}\setlength\itemsep{0em}
    \item For smaller problems, the strength of Algorithm \ref{alg:cuttingPlaneMethod}'s cuts allows it to outperform state-of-the-art methods such as the method of \cite{berk2017}. Moreover, for larger problem sizes, the adaptive branching strategy {\color{black}performs comparably to} Algorithm \ref{alg:cuttingPlaneMethod}. This suggests that {\color{black}the relative merits of both approaches are roughly even, and which method is preferable may depend on the problem data.}
    \item Generating outer-approximation cuts and valid upper bounds from the Gershgorin circle theorem are both powerful ideas, but the greatest aggregate power appears to arise from intersecting these bounds, rather than using one bound alone.
\end{itemize}
\begin{itemize}
    \item {\color{black}Once both $k$ and $p$ are sufficiently large (e.g. $p>300$ and $k>10$), no approach is able to solve the problem to provable optimality within $600$s. This motivates our study of convex relaxations and randomized rounding methods in the next section.}
\end{itemize}
\vspace{-5mm}

\subsection{Convex Relaxations and Randomized Rounding Methods}
In this section, we apply Algorithm \ref{alg:greedymethod} to obtain high quality convex relaxations and feasible solutions for the datasets studied in the previous subsection, and compare the relaxation to a difference convex relaxation developed by \citet{d2008optimal}, in terms of the quality of the upper bound and the resulting greedily rounded solutions. All experiments were implemented using the same specifications as the previous section. { Note that \citet{d2008optimal}'s upper bound\footnote{ Strictly speaking, \citet{d2008optimal} does not actually write down this formulation in their work. Indeed, their bound involves dual variables which cannot be used directly to generate feasible solutions via greedy rounding. However, the fact that this bound and \citep[Problem (8)]{d2008optimal} are dual to each other follows directly from strong semidefinite duality, and therefore we refer to this formulation as being due to \cite{d2008optimal} (it essentially is).} which we compare against is:
\begin{equation}
    \begin{aligned}\label{prob:lprelax3}
    \max_{\substack{\bm{z} \in [0, 1]^p}: \bm{e}^\top \bm{z} \leq k} \: \max_{\bm{X} \succeq \bm{0}, \bm{P}_i \succeq \bm{0}\ \forall i \in [p]} \quad & \sum_{i \in [p]}\ \langle \bm{a}_i\bm{a}_i^\top, \bm{P}_i\rangle \ \text{s.t.} \ \mathrm{tr}(\bm{X})=1,\ \mathrm{tr}(\bm{P}_i)=z_i, \ \bm{X}\succeq \bm{P}_i \ \forall i \in [p],
\end{aligned}
\end{equation}
where $\bm{\Sigma}=\sum_{i=1}^p \bm{a}_i\bm{a}_i^\top$ is a Cholesky decomposition of $\bm{\Sigma}$, and we obtain feasible solutions from this relaxation by greedily rounding an optimal $\bm{z}$ in the bound \textit{\`{a} la} Algorithm \ref{alg:greedymethod}. {\color{black} To allow for a fair comparison,} we also consider augmenting this formulation with the inequalities derived in Section \ref{ssec:validineq} to obtain the following stronger yet more expensive to solve relaxation:
\begin{equation}
   \label{spca:sdpplussocp2}
    \begin{aligned}
    \max_{\substack{\bm{z} \in [0, 1]^p}: \bm{e}^\top \bm{z} \leq k} \: \max_{\substack{\bm{X} \succeq \bm{0},\\ \bm{P}_i \succeq \bm{0}\ \forall i \in [p]}} \quad & \sum_{i \in [p]}\ \langle \bm{a}_i\bm{a}_i^\top, \bm{P}_i\rangle \ \text{s.t.} \ \mathrm{tr}(\bm{X})=1,\ \mathrm{tr}(\bm{P}_i)=z_i, \ \bm{X}\succeq \bm{P}_i \ \forall i \in [p],\\
    &\sum_{j \in [p]} X_{i,j}^2 \leq X_{i,i}z_i, \Vert \bm{X}\Vert_1 \leq k.
\end{aligned}
\end{equation}
}

{\color{black} We first apply these relaxations on datasets where Algorithm \ref{alg:cuttingPlaneMethod} terminates, hence the optimal solution is known and can be compared against.} We report the quality of both methods with and without the additional inequalities discussed in Section \ref{ssec:validineq}, in Tables \ref{tab:comparison_convrelaxations}-\ref{tab:comparison_convrelaxations2} respectively\footnote{For the instances of \eqref{prob:lprelax3} or \eqref{spca:sdpplussocp2} where $p>13$ we used SCS version $2.1.1$ (with default parameters) instead of Mosek, since Mosek required more memory than was available in our computing environment, and SCS takes an augmented Lagrangian approach which is less numerically stable but requires significantly less memory. That is, \eqref{prob:lprelax3}'s formulation is too expensive to solve via IPMs on a laptop when $p=50$.}.

\begin{table}[h]
\centering\footnotesize
\caption{{\color{black}Quality of relaxation gap (upper bound vs. optimal solution-denoted R. gap), objective gap (rounded solution vs. optimal solution-denoted O. gap) and runtime in seconds per method.}}
\begin{tabular}{@{}l l l r r r r r r@{}} \toprule
Dataset &  $p$ & $k$ & \multicolumn{3}{c@{\hspace{0mm}}}{Alg. \ref{alg:greedymethod} with \eqref{prob:lprelax2}} & \multicolumn{3}{c@{\hspace{0mm}}}{Alg. \ref{alg:greedymethod} with \eqref{prob:lprelax3}} \\
\cmidrule(l){4-6} \cmidrule(l){7-9} & &  & R. gap $(\%)$& O. gap  $(\%)$ & Time(s) & R. gap  $(\%)$ & O. gap $(\%)$ & Time(s) \\\midrule
Pitprops & $13$ & $5$ &$23.8\%$ & $0.00\%$ &$0.02$ &$23.8\%$ & $16.1\%$ &$0.46$\\
 &  & $10$ & $1.10\%$ &$0.30\%$ &$0.03$ &$1.10\%$ & $1.33\%$ &$0.46$\\\midrule
Wine & $13$ & $5$ &$36.8\%$ & $0.00\%$ &$0.02$ &$36.8\%$ & $40.4\%$ &$0.433$\\
 &  & $10$ &$2.43\%$ & $0.26\%$ &$0.03$ &$2.43\%$ & $15.0\%$ &$0.463$\\\midrule
 Miniboone & $50$ & $5$ & $781.3\%$ & $235.6\%$ & $7.37$ & $781.2\%$ & $34.7\%$ & $1,191.0$\\
& & $10$ & $340.6\%$ & $117.6\%$ & $7.50$ & $340.6\%$ & $44.9\%$ & $1,102.6$\\
& & \color{black} $20$ &  \color{black} $120.3\%$ &  \color{black} $38.08\%$ &  \color{black} $6.25$ &  \color{black} $120.3\%$ &  \color{black} $31.9\%$ &  \color{black} $1,140.2$ \\
\bottomrule
\end{tabular}
\label{tab:comparison_convrelaxations}
\end{table}

\begin{table}[h]
\centering\footnotesize
\caption{\color{black} Quality of relaxation gap (upper bound vs. optimal solution-denoted R. gap), objective gap (rounded solution vs. optimal solution-denoted O. gap) and runtime in seconds per method, with additional inequalities from Section \ref{ssec:validineq}.}
\begin{tabular}{@{}l l l r r r r r r@{}} \toprule
Dataset &  $p$ & $k$ & \multicolumn{3}{c@{\hspace{0mm}}}{Alg. \ref{alg:greedymethod} with \eqref{spca:sdpplussocp}} &  \multicolumn{3}{c@{\hspace{0mm}}}{Alg. \ref{alg:greedymethod} with \eqref{spca:sdpplussocp2}} \\
\cmidrule(l){4-6} \cmidrule(l){7-9} & &  & R. gap $(\%)$& O. gap  $(\%)$ & Time(s) & R. gap  $(\%)$ & O. gap $(\%)$ & Time(s) \\\midrule
Pitprops & $13$ & $5$ & $0.71\%$ &$0.00\%$ & $0.17$ &$1.53\%$ & $0.00\%$ &$0.55$ \\
 &  & $10$ & $0.12\%$ &$0.00\%$ & $0.27$ &$1.10\%$ & $0.00\%$ &$3.27$\\\midrule
Wine & $13$ & $5$ &$1.56\%$ & $0.00\%$ &$0.24$ &$2.98\%$ & $15.03\%$ &$0.95$\\
 &  & $10$ &$0.40\%$ & $0.00\%$ &$0.22$ &$2.04\%$ & $0.00\%$ &$1.15$\\\midrule
Miniboone & $50$ & $5$ & $0.00\%$ & $0.00\%$ & $163.3$ & $0.00\%$ & $0.01\%$ & $500.7$\\
& & $10$ & $0.00\%$ & $0.00\%$ & $148.5$ & $0.00\%$ & $0.02\%$ & $489.9$\\
& &  \color{black} $20$ &  \color{black} $0.00\%$ &  \color{black} $0.00\%$ &  \color{black} $194.5$ &  \color{black} $0.00\%$ &  \color{black} $0.00\%$ &  \color{black} $776.3$\\
\bottomrule
\end{tabular}
\label{tab:comparison_convrelaxations2}
\end{table}

Observe that applying Algorithm \ref{alg:greedymethod} without the additional inequalities (Table \ref{tab:comparison_convrelaxations}) yields rather poor relaxations and randomly rounded solutions. However, by intersecting our relaxations with the additional inequalities from Section \ref{ssec:validineq} (Table \ref{tab:comparison_convrelaxations2}), we obtain extremely high quality relaxations. Indeed, with the additional inequalities, Algorithm \ref{alg:greedymethod} using formulation \eqref{spca:sdpplussocp} identifies the optimal solution in all instances (0\% O. gap), and always supplies a bound gap of less than $2\%$. Moreover, in terms of obtaining high-quality solutions, the new inequalites allow Problem \eqref{spca:sdpplussocp} to perform
as well or better as Problem \eqref{prob:lprelax3}, despite optimizing over one semidefinite matrix, rather than $p+1$ semidefinite matrices. This suggests that Problem \eqref{spca:sdpplussocp} should be considered as a viable, more scalable and more accurate alternative to existing SDO relaxations such as Problem \eqref{prob:lprelax3}. For this reason, we shall only consider using Problem \eqref{spca:sdpplussocp}'s formulation for the rest of the paper.

We remark however that the key drawback of applying these methods is that, as implemented in this section, they do not scale to sizes beyond which Algorithm \ref{alg:cuttingPlaneMethod} successfully solves. This is a drawback because Algorithm \ref{alg:cuttingPlaneMethod} supplies an exact certificate of optimality, while these methods do not. In the following set of experiments, we therefore investigate numerical techniques to improve the scalability of Algorithm \ref{alg:greedymethod}.

\subsection{Scalable Dual Bounds and {\color{black}Randomized} Rounding Methods}
To improve the scalability of Algorithm \ref{alg:greedymethod}, we relax the PSD constraint on $\bm{X}$ in \eqref{prob:lprelax2} and \eqref{spca:sdpplussocp}. With these enhancements, we demonstrate that Algorithm \ref{alg:greedymethod} can be successfully scaled to generate high-quality bounds for $1000s \times 1000s$ matrices.

{ As discussed in Remark \ref{remark:scalability}, we can replace the PSD constraint $\bm{X} \succeq \bm{0}$ by requiring that the $p(p-1)/2$ two by two minors of $\bm{X}$ are non-negative: $X_{i,j}^2 \leq X_{i,i} X_{j,j}$. Second, we consider adding $20$ linear inequalities of the form $\langle \bm{X}, \bm{x}_t\bm{x}_t^\top\rangle \geq 0$, for some vector $\bm{x}_t$ \citep[see][for a discussion]{bertsimas2019polyhedral}.} Table \ref{tab:comparison_convrelaxations3} reports the performance of Algorithm \ref{alg:greedymethod} (with the relaxation \eqref{spca:sdpplussocp}) with these two approximations of the positive semidefinite cone, ``Minors'' and ``Minors + 20 inequalities'' respectively. {\color{black}Note that we report the entire duality gap (i.e. do not break the gap down into its relaxation and objective gap components) since, as reflected in Table \ref{tab:comparison}, some of these instances are currently too large to solve to optimality.}

\begin{table}[h]
\centering\footnotesize
\caption{Quality of bound gap (rounded solution vs. upper bound) and runtime in seconds of Algorithm \ref{alg:greedymethod} with \eqref{spca:sdpplussocp}, outer-approximation of the PSD cone.}
\begin{tabular}{@{}l l l r r r r@{}} \toprule
Dataset &  $p$ & $k$ & \multicolumn{2}{c@{\hspace{0mm}}}{Minors} &  \multicolumn{2}{c@{\hspace{0mm}}}{Minors + 20 inequalities} \\
\cmidrule(l){4-5} \cmidrule(l){6-7} & &  & Gap $(\%)$ & Time(s) & Gap  $(\%)$ & Time(s) \\\midrule
Pitprops & $13$ & $5$ & $1.51\%$ & $0.02$ &$0.72\%$ &$0.36$ \\
 &  & $10$ & $5.29\%$ & $0.02$ &$1.12\%$ &$0.36$ \\\midrule
Wine & $13$ & $5$ & $2.22\%$ & $0.02$ &$1.59\%$ &$0.38$ \\
 &  & $10$ & $3.81\%$ & $0.02$ &$1.50\%$ &$0.37$ \\\midrule
Miniboone & $50$ & $5$ & $0.00\%$ & $0.11$ &$0.00\%$ &$0.11$ \\
 & & $10$ & $0.00\%$ & $0.12$ &$0.00\%$ &$0.12$ \\
& &  \color{black} $20$ &  \color{black} $0.00\%$ &  \color{black} $0.39$ &  \color{black} $0.00\%$ & \color{black} $0.39$ \\ \midrule
Communities & $101$ & $5$ & $0.07\%$ & $0.67$ &$0.07\%$ &$14.8$ \\
& & $10$ & $0.66\%$ & $0.68$ &$0.66\%$ &$14.4$ \\
& &  \color{black} $20$ &  \color{black} $3.32\%$ &  \color{black} $1.84$ & \color{black} $2.23\%$ & \color{black} $33.5$ \\\midrule
Arrhythmia & $274$ & $5$  & $3.37\%$ & $27.2$ &$1.39\%$ &$203.6$ \\
& & $10$ & $3.01\%$ & $25.6$ &$1.33\%$ &$184.0$ \\
& &  \color{black} $20$ &  \color{black} $8.87\%$ &  \color{black} $21.8$ & \color{black} $4.48\%$ & \color{black} $426.8$ \\\midrule
Micromass & $1300$ & $5$  & $0.04\%$ & $239.4$ &$0.01\%$ &$4,639$ \\
& & $10$ & $0.63\%$ & $232.6$ &$0.32\%$ &$6,392$ \\
& &  \color{black} $20$ &  \color{black} $13.1\%$ &  \color{black} $983.5$ & \color{black} $5.88\%$ & \color{black} $16,350$ \\
\bottomrule
\end{tabular}
\label{tab:comparison_convrelaxations3}
\end{table}

Observe that if we impose constraints on the $2\times 2$ minors only then we obtain a solution {\color{black} certifiably} within {\color{black}$13\%$} of optimality in seconds (resp. minutes) for $p=100$s (resp. $p=1000$s). Moreover, adding $20$ linear inequalities, we obtain a solution within $6\%$ of optimality in minutes (resp. hours) for $p=100$s (resp. $p=1000$s). {\color{black}Moreover, the bound gaps compare favorably to Algorithm \ref{alg:cuttingPlaneMethod} and the method of \cite{berk2017} for instances which these methods could not solve to certifiable optimality. For instance, for the Arrhythmia dataset when $k=20$ we obtain a bound gap of less than $9\%$ in $20$s, while the method of \cite{berk2017} obtains a bound gap of $18.45\%$ in $600$s. This illustrates the value of the proposed relax+round method on datasets which are currently too large to be optimized over exactly.}

To conclude this section, we explore Algorithm \ref{alg:greedymethod}'s ability to scale to even higher dimensional datasets in a high performance setting, by running the method on one Intel Xeon E5--2690 v4 2.6GHz CPU core using 600 GB RAM. Table \ref{tab:comparison_convrelaxations4} reports the methods scalability and performance on the Wilshire $5000$, and \verb|Arcene| UCI datasets. For the \verb|Gisette| dataset, we report on the methods performance when we include the first $3,000$ and $4,000$ rows/columns (as well as all $5,000$ rows/columns). Similarly, for the \verb|Arcene| dataset we report on the method's performance when we include the first $6,000$, $7,000$ or $8,000$ rows/columns. We do not report results for the \verb|Arcene| dataset for $p>8,000$, as computing this requires more memory than was available (i.e. $>600$ GB RAM).
We do not report the method's performance when we impose linear inequalities for the PSD cone, as solving the relaxation without them is already rather time consuming. Moreover, we do not impose the $2 \times 2$ minor constraints to save memory, do not impose $\vert X_{i,j}\vert \leq M_{i,j}z_i$ {\color{black}when $p \geq 4000$} to save even more memory, and report the overall bound gap, as improving upon the randomly rounded solution is challenging in a high-dimensional setting.

\begin{table}[h]
\centering\footnotesize
\caption{Quality of bound gap (rounded solution vs. upper bound) and runtime in seconds.}
\begin{tabular}{@{}l l l r r @{}} \toprule
Dataset &  $p$ & $k$ & \multicolumn{2}{c@{\hspace{0mm}}}{Algorithm \ref{alg:greedymethod} (SOC relax)+Inequalities}\\
\cmidrule(l){4-5}  & &  & Bound gap $(\%)$ & Time(s) \\\midrule
Wilshire $5000$ & $2130$ & $5$ & $0.38\%$ & $1,036$\\
 &  & $10$ & $0.24\%$ & $1,014$\\
  &  &  \color{black} $20$ &  \color{black} $0.36\%$ &  \color{black} $1,059$\\
\midrule
Gisette  & $3000$ & $5$ & $1.67\%$ & $2,249$\\
 &  & $10$ & $35.81\%$ & $2,562$\\
  &  &  \color{black} $20$ &  \color{black} $10.61\%$ &  \color{black} $3,424$\\
\midrule
Gisette  & $4000$ & $5$ & $1.55\%$ & $1,402$\\
 &  & $10$ & $54.4\%$ & $1,203$\\
   &  &  \color{black} $20$ &  \color{black} $11.84\%$ &  \color{black} $1,435$\\
\midrule
Gisette  & $5000$ & $5$ & $1.89\%$ & $2,169$\\
 &  & $10$ & $2.22\%$ & $2,455$\\
  &  &  \color{black} $20$ &  \color{black} $7.16\%$ &  \color{black} $2,190$\\\midrule
Arcene & $6000$ & $5$ & $0.01\%$ & $3,333$\\
 &  & $10$ & $0.06\%$ & $3,616$\\
 &  &  \color{black} $20$ &  \color{black} $0.14\%$ &  \color{black} $3,198$\\\midrule
 Arcene & $7000$ & $5$ & $0.03\%$ & $4,160$\\
 &  & $10$ & $0.05\%$ & $4,594$ \\
   &  &  \color{black} $20$ &  \color{black} $0.25\%$ &  \color{black} $4,730$\\
 \midrule
  Arcene & $8000$ & $5$ & $0.02\%$ & $6,895$\\
 &  & $10$ & $0.17\%$ &  $8,479$\\
   &  &  \color{black} $20$ &  \color{black} $0.21\%$ &  \color{black} $6,335$\\ 
\bottomrule
\end{tabular}
\label{tab:comparison_convrelaxations4}
\end{table}

These results suggest that if we solve the SOC relaxation using a first-order method rather than an interior point method, our approach could successfully generate certifiably near-optimal PCs when $p=10,000$s, particularly if combined with a feature screening technique \citep[see][]{d2008optimal, atamturk2020feature}.

{\color{black}
\subsection{Performance of Exact and Approximate Methods on Synthetic Data}

We now compare the exact and approximate methods against existing state-of-the-art methods in a spiked covariance matrix setting. 
We use the experimental setup laid out in \citet[Section 7.1]{d2008optimal}. We recover the leading principal component of a test matrix\footnote{\color{black}This statement of the test matrix is different to \citet[Section 7.1]{d2008optimal}, who write $\bm{\Sigma}=\bm{U}^\top \bm{U}+\sigma \bm{v}\bm{v}^\top$, rather than $\bm{\Sigma}=\frac{1}{n}\bm{U}^\top \bm{U}+\frac{\sigma}{\Vert \bm{v}\Vert_2^2} \bm{v}\bm{v}^\top$. However, it agrees with their source code.} $\bm{\Sigma} \in S^{p}_+$, where $p=150$,  $\bm{\Sigma}=\frac{1}{n}\bm{U}^\top \bm{U}+\frac{\sigma}{\Vert \bm{v}\Vert_2^2} \bm{v}\bm{v}^\top$, $\bm{U} \in [0, 1]^{150 \times 150}$ is a noisy matrix with i.i.d. standard uniform entries, $\bm{v} \in \mathbb{R}^{150}$ is a vector of signals such that
\begin{align}
    v_i=\begin{cases}
    1, & \text{if} \ i \leq 50,\\
    \frac{1}{i-50}, & \text{if} \ 51 \leq i \leq 100,\\
    0, & \text{otherwise,}
    \end{cases}
\end{align}
and $\sigma=2$ is the signal-to-noise ratio. The methods which we compare are:
\begin{itemize}\itemsep0em
    \item \textbf{Exact}: Algorithm \ref{alg:cuttingPlaneMethod} with Gershgorin inequalities and a time limit of $600$s.
    \item \textbf{Approximate:} Algorithm \ref{alg:greedymethod} with Problem \eqref{spca:sdpplussocp}, the SOC outer-approximation of the PSD cone, no PSD cuts, and the additional SOC inequalities.
    \item \textbf{Greedy:} as proposed by \cite{moghaddam2006spectral} and laid out in \citep[Algorithm 1]{d2008optimal}, start with a solution $\bm{z}$ of cardinality $1$ and iteratively augment this solution vector with the index which gives the maximum variance contribution. Note that \cite{d2008optimal} found this method outperformed the $3$ other methods (approximate greedy, thresholding and sorting) they considered in their work.
    \item \textbf{Truncated Power Method:} as proposed by \citet{yuan2013truncated}, alternate between applying the power method to the solution vector and truncating the vector to ensure that it is $k$-sparse. Note that \cite{berk2017} found that this approach performed better than $5$ other state-of-the-art methods across the real-world datasets studied in the previous section of this paper and often matched the performance of the method of \cite{berk2017}—indeed, it functions as a warm-start for the later method.
    \item \textbf{Sorting:} sort the entries of $\bm{\Sigma}_{i,i}$ by magnitude and set $z_i=1$ for the $k$ largest entries of $\bm{\Sigma}$, as studied in \cite{d2008optimal}. This naive method serves as a benchmark for the value of optimization in the more sophisticated methods considered here.
    \end{itemize}

Figures \ref{fig:sensitivitytok} depicts the ROC curve (true positive rate vs. false positive rate for recovering the support of $\bm{v}$) over $20$ synthetic random instances, as we vary $k$ for each instance. We observe that among all methods, the sorting method is the least accurate, with a substantially larger false detection rate for a given true positive rate than the remaining methods (AUC$=0.7028$). The truncated power method and our exact method\footnote{\color{black}Note that the exact method would dominate the remaining methods if given an unlimited runtime budget. Its poor performance reflects its inability to find the true optimal solution within $600$ seconds.} then offer a substantial improvement over sorting, with respective AUCs of $0.7482$ and $0.7483$. The greedy method then offers a modest improvement over them (AUC$=0.7561$) and the approximate relax+round method is the most accurate (AUC$=0.7593$).

In addition to support recovery, Figure \ref{fig:sensitivitytok2} reports average runtime (left panel) and average optimality gap (right panel) over the same instances. Observe that among all methods, only the exact and the approximate relax+round methods provide optimality gaps, i.e., {\color{black} numerical certificates} of near optimality. On this metric, relax+round supplies average bound gaps of $1\%$ or less on all instances, while the exact method typically supplies bound gaps of $30\%$ or more. This comparison illustrates the tightness of the valid inequalities from Section \ref{ssec:validineq} that we included in the relaxation. Moreover, the relax+round method converges in less than one minute on all instances. All told, the relax+round method is the best performing method overall, although if $k$ is set to be sufficiently close to $0$ or $p$ all methods behave comparably. 
In particular, the relax+round method should be preferred over the exact method, even though the exact method performs better at smaller problem sizes.
\begin{figure}[h]\centering
    \includegraphics[scale=0.6]{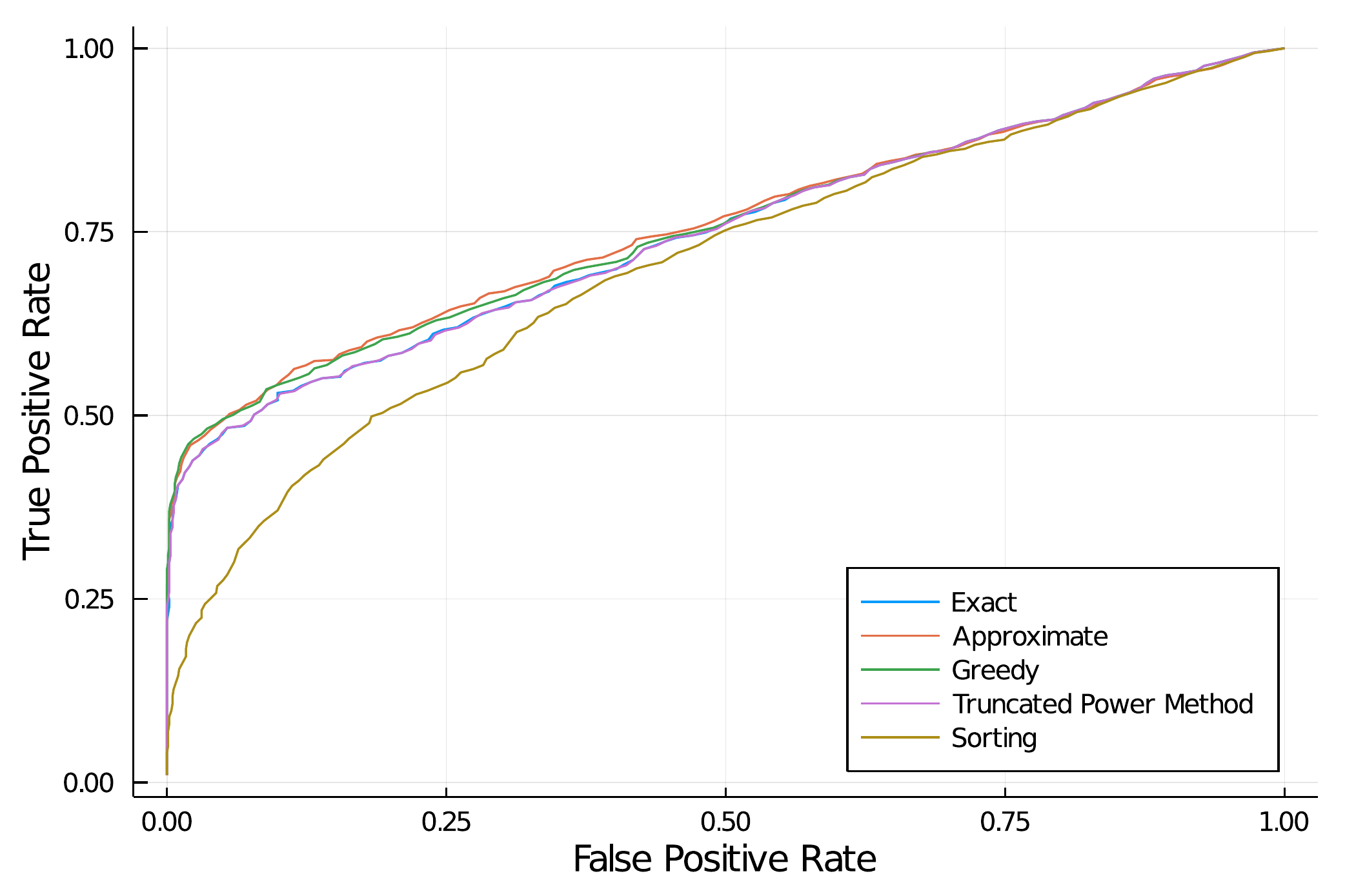}
       \caption{\color{black}ROC curve over $20$ synthetic instances where $p=150$, $k_{\text{true}}=100$ is unspecified.}
   \label{fig:sensitivitytok}
\end{figure}
\begin{figure}[h]
        \begin{subfigure}[t]{.45\linewidth}
            \includegraphics[scale=0.4]{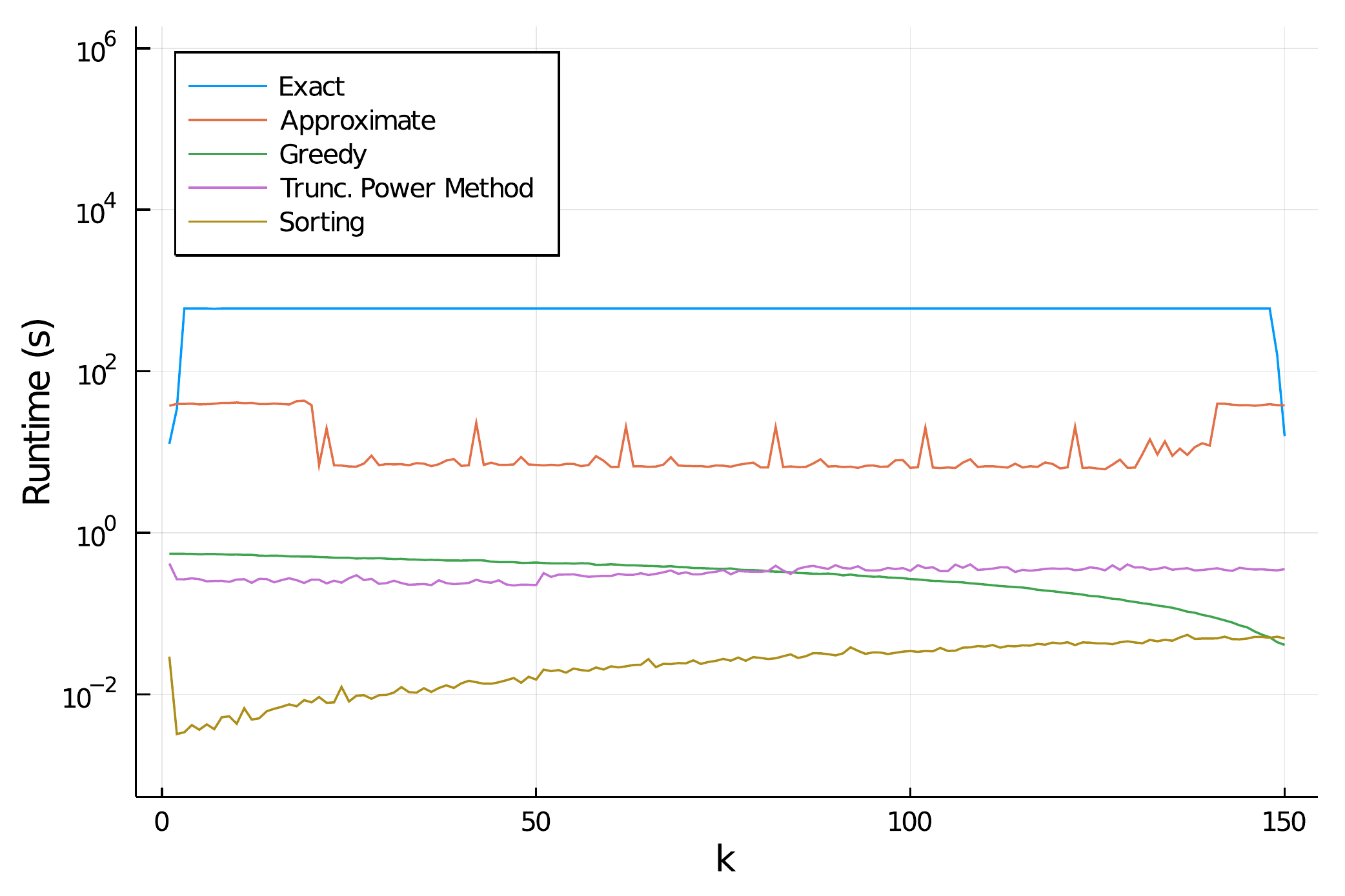}
    \end{subfigure}
    \begin{subfigure}[t]{.45\linewidth}
    \centering
            \includegraphics[scale=0.4]{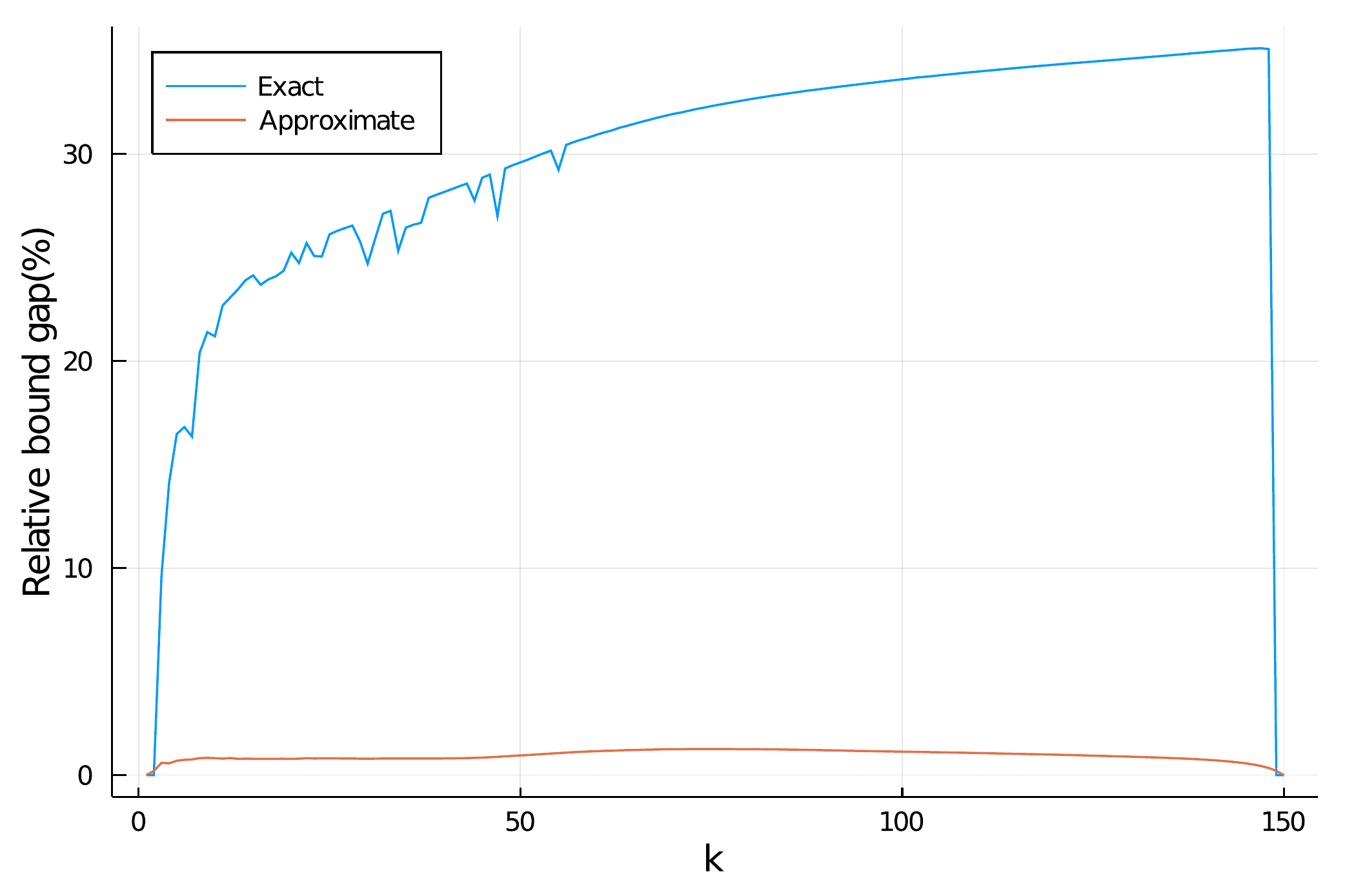}
    \end{subfigure}
   \caption{\color{black}Average time to compute solution, and optimality gap over $20$ synthetic instances where $p=150$, $k_{\text{true}}=100$ is unspecified.}
   \label{fig:sensitivitytok2}
\end{figure}
}
{\color{black}
\subsection{Summary and Guidelines From Experiments}

In summary, our main findings from our numerical experiments are as follows:
\begin{itemize}
\item For small or medium scale problems where $p \leq 100$ or $k \leq 10$, exact methods such as Algorithm \ref{alg:cuttingPlaneMethod} or the method of \cite{berk2017} reliably obtain certifiably optimal or near-optimal solutions in a short amount of time, and should therefore be preferred over other methods. However, for larger-scale sparse PCA problems, exact methods currently do not scale as well as approximate or heuristic methods. \color{black}
\item For larger-scale sparse PCA problems, our proposed combination of solving a second-order cone relaxation and rounding greedily reliably supplies certifiably near-optimal solutions in practice (if not in theory) in a relatively small amount of time. Moreover, it outperforms other state-of-the-art heuristics including the greedy method of \cite{moghaddam2006spectral, d2008optimal} and the Truncated Power Method of \cite{yuan2013truncated}. Accordingly, it should be considered as a reliable and more accurate alternative for problems where $p=1000$s.\color{black}
\item In practice, for even larger-scale problem sizes, we recommend using a combination of these methods: a computationally cheaper method (with $k$ set in the $1000$s) as a feature screening method, to be followed by the approximate relax+round method (with $k$ set in the $100$s) and/or the exact method, if time permits.
\end{itemize}

}

\section{Three Extensions and their Mixed-Integer Conic Formulations}
We conclude by discussing {\color{black}three} extensions of sparse PCA where our methodology applies.

\subsection{Non-Negative Sparse PCA}
One potential extension to this paper would be to develop a certifiably optimal algorithm for non-negative sparse PCA \citep[see][for a discussion]{zass2007nonnegative}, i.e., develop a tractable reformulation of
\begin{align*}
    \max_{\bm{x} \in \mathbb{R}^p} \quad & \langle \bm{x}\bm{x}^\top, \bm{\Sigma} \rangle \ \text{s.t.}\ \bm{x}^\top \bm{x}=1, \bm{x} \geq \bm{0}, \Vert \bm{x}\Vert_0 \leq k.
\end{align*}

Unfortunately, we cannot develop a MISDO reformulation of non-negative sparse PCA \textit{mutatis mutandis} Theorem \ref{thm:misdpreformthm}. Indeed, while we can still set $\bm{X}=\bm{x}\bm{x}^\top$ and relax the rank-one constraint, if we do so then, by the non-negativity of $\bm{x}$, lifting $\bm{x}$ yields:
\begin{equation}\label{misdpprimal_cp}
\begin{aligned}
    \max_{\bm{z} \in \{0, 1\}^p: \bm{e}^\top \bm{z} \leq k} \ \max_{\bm{X} \in \mathcal{C}_n} \quad & \langle \bm{\Sigma}, \bm{X} \rangle\\ \text{s.t.} \quad &  \mathrm{tr}(\bm{X})=1,\ X_{i,j}=0 \ \text{if} \ z_i=0, \ X_{i,j}=0 \ \text{if} \ z_j=0 \ \forall i, j \in [p].
\end{aligned}
\end{equation}
where $\mathcal{C}_n:=\{\bm{X}:\ \exists \ \bm{U} \geq \bm{0}, \bm{X}=\bm{U}^\top \bm{U}\}$ denotes the completely positive cone, which is NP-hard to separate over and cannot currently be optimized over tractably \citep{dong2013separating}. Nonetheless, we can develop relatively tractable mixed-integer conic upper and lower bounds for non-negative sparse PCA. Indeed, we can obtain a fairly tight upper bound by replacing the completely positive cone with the larger doubly non-negative cone $\mathcal{D}_n:=\{\bm{X} \in S^p_+: \bm{X} \geq \bm{0}\}$, which is a high-quality outer-approximation of $\mathcal{C}_n$, indeed exact when $k \leq 4$ \citep{burer2009difference}.

Unfortunately, this relaxation is strictly different in general, since the extreme rays of the doubly non-negative cone are not necessarily rank-one when $k \geq 5$ \citep{burer2009difference}. Nonetheless, to obtain feasible solutions which supply lower bounds, we could inner approximate the completely positive cone with the cone of non-negative scaled diagonally dominant matrices \citep[see][]{ahmadi2019dsos,bostanabad2018inner}.

\subsection{Sparse PCA on Rectangular Matrices}
A second extension would be to extend our methodology to the non-square case:
\begin{align}
    \max_{\bm{x} \in \mathbb{R}^m, \bm{y} \in \mathbb{R}^n} \quad & \bm{x}^\top \bm{A}\bm{y} \ \text{s.t.} \ \Vert \bm{x}\Vert_2=1, \Vert \bm{y}\Vert_2=1, \Vert \bm{x}\Vert_0 \leq k, \Vert \bm{y}\Vert_0 \leq k.
\end{align}

Observe that computing the spectral norm of a matrix $\bm{A}$ is equivalent to:
\begin{align}
\max_{\bm{X} \in \mathbb{R}^{n \times m}}  \quad \langle \bm{A}, \bm{X}\rangle \ \text{s.t.} \ \begin{pmatrix} \bm{U} & \bm{X}\\ \bm{X}^\top & \bm{V}\end{pmatrix} \succeq \bm{0}, \mathrm{tr}(\bm{U})+\mathrm{tr}(\bm{V})=2,
\end{align}
where, in an optimal solution, $\bm{U}$ stands for $\bm{x}\bm{x}^\top$,  $\bm{V}$ stands for $\bm{y}\bm{y}^\top$ and $\bm{X}$ stands for $\bm{x}\bm{y}^\top$—this can be seen by taking the dual of \citep[Equation 2.4]{recht2010guaranteed}.

Therefore, by using the same argument as in the positive semidefinite case, we can rewrite sparse PCA on rectangular matrices as the following MISDO:
\begin{equation}
\begin{aligned}
    \max_{\bm{w} \in \{0, 1\}^m, \bm{z} \in \{0, 1\}^n}\max_{\bm{X} \in \mathbb{R}^{n \times m}}  \quad & \langle \bm{A}, \bm{X}\rangle \\ \text{s.t.} &  \ \begin{pmatrix} \bm{U} & \bm{X}\\ \bm{X}^\top & \bm{V}\end{pmatrix} \succeq \bm{0}, \mathrm{tr}(\bm{U})+\mathrm{tr}(\bm{V})=2,\\ & U_{i,j}=0 \ \text{if} \ w_i=0\ \forall i,j \in [m], \\
    & V_{i,j}=0 \ \text{if} \ z_i=0\ \forall i,j \in [n], \bm{e}^\top \bm{w} \leq k, \bm{e}^\top \bm{z} \leq k.
\end{aligned}
\end{equation}
\subsection{Sparse PCA with Multiple Principal Components}
A third extension where our methodology is applicable is the problem of obtaining multiple principal components simultaneously, rather than deflating $\bm{\Sigma}$ after obtaining each principal component. As there are multiple definitions of this problem, we now discuss the extent to which our framework encompasses each case.

\paragraph{Common Support:} Perhaps the simplest extension of sparse PCA to a multi-component setting arises when all $r$ principal components have common support. By retaining the vector of binary variables $\bm{z}$ and employing the
Ky-Fan theorem \citep[c.f.][Theorem 2.3.8]{wolkowicz2012handbook} to cope with multiple principal components, we obtain the following formulation in much the same manner as previously:
\begin{align}
    \max_{\bm{z} \in \{0, 1\}^p: \bm{e}^\top \bm{z} \leq k}\ \max_{\bm{X} \in S^p_+} \quad & \langle \bm{X}, \bm{\Sigma}\rangle\ \text{s.t.} \ \bm{0} \preceq \bm{X} \preceq \mathbb{I}, \ \mathrm{tr}(\bm{X})=r,\ X_{i,j}=0 \ \text{if} \ z_{i}=0\ \forall i \in [p].
\end{align}
Notably, the logical constraint $X_{i,j}=0$ if $z_i=0$, which formed the basis of our subproblem strategy, still successfully models the sparsity constraint. This suggests that (a) one can derive an equivalent subproblem strategy under common support, and (b) a cutting-plane method for common support should scale equally well as with a single component.

\paragraph{Disjoint Support:}
In a sparse PCA problem with disjoint support \citep{vu2012minimax}
, simultaneously computing the first $r$ principal components is equivalent to solving:
\begin{equation}
\begin{aligned}
    \max_{\substack{\bm{z} \in \{0, 1\}^{p \times r}: \bm{e}^\top \bm{z}_t \leq k\ \forall t \in [r], \\\bm{z}\bm{e} \leq \bm{e}}} \max_{\bm{W} \in \mathbb{R}^{p \times r}} \quad & \langle \bm{W}\bm{W}^\top, \bm{\Sigma}\rangle\\
    & \bm{W}^\top \bm{W}=\mathbb{I}_{r},\ W_{i,j}=0 \ \text{if} \ z_{i,t}=0\ \forall i \in [p], t \in [r],
\end{aligned}
\end{equation}
where $z_{i,t}$ is a binary variable denoting whether feature $i$ is a member of the $t$th principal component. By applying the technique used to derive Theorem \ref{thm:misdpreformthm} \textit{mutatis mutandis}, and invoking the Ky-Fan theorem \citep[c.f.][Theorem 2.3.8]{wolkowicz2012handbook} to cope with the rank-$r$ constraint, we obtain:
\begin{equation}
\begin{aligned}
    \max_{\bm{z} \in \{0, 1\}^p: \bm{e}^\top \bm{z} \leq k} \max_{\bm{X} \in S^p} \quad & \langle \bm{X}, \bm{\Sigma}\rangle\\
    & \bm{0} \preceq \bm{X} \preceq \mathbb{I}, \ \mathrm{tr}(\bm{X})=r,\ X_{i,j}=0 \ \text{if} \ Y_{i,j}=0\ \forall i \in [p],
\end{aligned}
\end{equation}
where $Y_{i,j}=\sum_{t=1}^r z_{i,t}z_{j,t}$ is a binary matrix denoting whether features $i$ and $j$ are members of the same principal component; this problem can be addressed by a cutting-plane method in much the same manner as when $r=1$. 

{\color{black}
\section*{Acknowledgments}
We are grateful to the three anonymous referees and the associate editor for many valuable comments which improved the paper.
}

{\footnotesize
\bibliographystyle{abbrvnat}

}

\begin{appendix}
\section{A Doubly Non-Negative Relaxation and a Goemans-Williamson Rounding Scheme} \label{sec:relax.goemans}

The MISDO formulation \eqref{misdpprimal} we derived in Section \ref{sec:reformulation} features big-$M$ constraints of the form $| X_{i,j} | \leq M_{i,j} z_i$. We did not include the equally valid inequalities $| X_{i,j} | \leq M_{i,j} z_j$, because they are redundant with the fact that $\bm{X}$ is symmetric. Actually,  \eqref{misdpprimal} is equivalent to
 \begin{align}
\max_{\bm{z} \in \{0, 1\}^p: \bm{e}^\top \bm{z} \leq k} \ \max_{\bm{X} \in S^p_+} \quad \langle \bm{\Sigma}, \bm{X} \rangle \text{ s.t. } \quad \mathrm{tr}(\bm{X})=1, \ |X_{i,j}| \leq M_{i,j}z_i z_j, \ \forall i, j \in [p].
\end{align}
The formulation above features products of binary variables $z_i z_j$.
Therefore, unlike several other problems involving cardinality constraints such as compressed sensing, relaxations of sparse PCA benefit from invoking an optimization hierarchy \citep[see][Section 2.4.1, for a counterexample specific to compressed sensing]{d2003relaxations}. In particular, let us model the outer product $\bm{z}\bm{z}^\top$ by introducing a matrix $\bm{Z}$ and imposing the semidefinite constraint $\bm{Z} \succeq \bm{z}\bm{z}^\top$. We tighten the formulation by requiring that $Z_{i,i}=z_i$ and imposing the linear inequalities $\max(z_i+z_j-1,0) \leq Z_{i,j} \leq \min(z_i, z_j)$. Hence, we obtain:
\begin{align}\label{prob:dnnrelax2}
\max_{\substack{\bm{z} \in [0, 1]^p:\bm{e}^\top \bm{z} \leq k, \\\bm{Z} \in \mathbb{R}^{p \times p}_+}} \: \max_{\bm{X} \succeq \bm{0}} \: \langle \bm{\Sigma}, \bm{X} \rangle \text{ s.t. }
& \mathrm{tr}(\bm{X})=1,\ \vert X_{i,j}\vert \leq M_{i,j}Z_{i,j}, \ \langle \bm{E}, \bm{Z} \rangle \leq k^2, \: Z_{i,i}=z_i, \\
& \max(z_i+z_j-1,0) \leq Z_{i,j} \leq \min(z_i, z_j), \nonumber\ \begin{pmatrix} 1 & \bm{z}^\top \\ \bm{z} & \bm{Z} \end{pmatrix} \succeq \bm{0}. \nonumber
\end{align}

Problem \eqref{prob:dnnrelax2} is a doubly non-negative relaxation, as we have intersected the Shor and RLT relaxations. This is noteworthy, because doubly non-negative relaxations dominate most other popular relaxations with $O(p^2)$ variables \citep[Theorem 1]{bao2011semidefinite}.

Relaxation \eqref{prob:dnnrelax2} is amenable to a \textit{Goemans-Williamson} rounding scheme \citep{goemans1995improved}. Namely, let $(\bm{z}^\star, \bm{Z}^\star)$ denote optimal choices of $(\bm{z}, \bm{Z})$ in Problem \eqref{prob:dnnrelax2}, $\hat{\bm{z}}$ be normally distributed random vector such that $\hat{\bm{z}} \sim \mathcal{N}(\bm{z}^\star, \bm{Z}^\star-\bm{z}^\star \bm{z}^{\star\top})$, and $\bar{\bm{z}}$ be a rounding of the vector such that $\bar{z_i}=1$ for the $k$ largest entries of $\hat{z}_i$; this is, up to feasibility on $\hat{\bm{z}}$, equivalent to the hyperplane rounding scheme of \citet{goemans1995improved}  \citep[see][for a proof]{bertsimas1998semidefinite}. We formalize this procedure in Algorithm \ref{alg:gwmethod}. As Algorithm \ref{alg:gwmethod} returns one of multiple possible $\bar{\bm{z}}$'s, a computationally useful strategy is to run the random rounding component several times and return the best solution.

\begin{algorithm*}[h]
\caption{A Goemans-Williamson rounding method for Problem \eqref{OriginalSPCA}}
\label{alg:gwmethod}
\begin{algorithmic}\normalsize
\REQUIRE Covariance matrix $\bm{\Sigma}$, sparsity parameter $k$
\STATE Compute $\bm{z}^\star, \bm{Z}^\star$ solution of \eqref{prob:dnnrelax2}
\STATE Compute $\hat{\bm{z}} \sim \mathcal{N}(\bm{z}^\star, \bm{Z}^\star-\bm{z}^\star \bm{z}^{\star\top})$
\STATE Construct $\bar{\bm{z}} \in \{0, 1\}^p: \bm{e}^\top \bar{\bm{z}}=k$ such that $\bar{z}_i\geq \bar{z}_j$ if $\hat{z}_i \geq \hat{z}_j$.
\STATE Compute $\bm{X}$ solution of \vspace{-2mm}
\begin{align*}
    \max_{\bm{X} \in S^p_+} \ \langle \bm{\Sigma}, \bm{X} \rangle \ \text{s.t.} \ \mathrm{tr}(\bm{X})=1, X_{i,j}=0 \ \text{if} \ \bar{z}_i \bar{z}_j=0 \ \forall i,j \in [p].
\end{align*} \vspace{-5mm}
\RETURN $\bm{z}, \bm{X}$.
\end{algorithmic}
\end{algorithm*}

A very interesting question is whether it is possible to produce a constant factor guarantee on the quality of Algorithm \ref{alg:gwmethod}'s rounding, as \citet{goemans1995improved} successfully did for binary quadratic optimization. Unfortunately, despite our best effort, this does not appear to be possible as the quality of the rounding depends on the value of the optimal dual variables, which are hard to control in this setting. This should not be too surprising for two distinct reasons. Namely, (a) sparse regression, which reduces to sparse PCA \citep[see][Section 6.1]{d2008optimal} is strongly NP-hard \citep{chen2019approximation}, and (b) sparse PCA is hard to approximate within a constant factor under the Small Set Expansion (SSE) hypothesis \citep{chan2016approximability}, meaning that producing a constant factor guarantee would contradict the SSE hypothesis of \citet{raghavendra2010graph}.

We close this appendix by noting that a similar in spirit (although different in both derivation and implementation) combination of taking a semidefinite relaxation of $\bm{z} \in \{0, 1\}^p$ and rounding \textit{\`a la} Goemans-Williamson has been proposed for sparse regression problems \citep{dong2015regularization}.

\end{appendix}

\end{document}